\newtheorem{defi}{Definition}
\newtheorem{teo}{Theorem}
\newtheorem{lem}{Lemma}
\newtheorem{prop}{Proposition}
\newtheorem{remark}{Remark}
\newtheorem{cor}{Corollary}
\newcommand{\st}{\leq_{\mathrm{st}}}
\newcommand{\stdom}{\leq_{\sigma,k}}
\newcommand{\stdompar}[1]{\leq_{#1}}
\newcommand{\1}[1]{\mathbbm{1}_{#1}}
\newcommand{\R}{\mathbb{R}}
\newcommand{\esp}[1]{\mathbb{E}\left[#1\right]}
\newcommand{\var}[1]{\text{Var}\left(#1\right)}
\numberwithin{equation}{section}
\def\@maketitle{%
  \newpage
  \null
  \vskip 2em%
  \begin{center}%
  \let \footnote \thanks
    {\Large\bfseries \@title \par}%
    \vskip 1.5em%
    {\normalsize
      \lineskip .5em%
      \begin{tabular}[t]{c}%
        \@author
      \end{tabular}\par}%
    \vskip 1em%
    {\normalsize \@date}%
  \end{center}%
  \par
  \vskip 1.5em}
\title{On the links between Stein transforms and concentration inequalities for dependent random variables}
\author{Santiago Arenas-Velilla
  \thanks{Electronic address: \texttt{santiago.arenas@cimat.mx}}}
\affil{Centro de Investigación en Matemáticas, Guanajuato, MEXICO}
\author{Emilien Joly
  \thanks{Electronic address: \texttt{emilien.joly@cimat.mx}; Corresponding author.\\
  The research of this author was partially funded by CONACYT, Ciencia de Frontera project 1043167.}}
\affil{Centro de Investigación en Matemáticas, Guanajuato, MEXICO}
\begin{document}

\maketitle

\begin{abstract}
    In this paper, we explore some links between transforms derived by Stein's method and concentration inequalities. In particular, we show that the stochastic domination of the zero bias transform of a random variable is equivalent to sub-Gaussian concentration. For this purpose a new stochastic order is considered. In a second time, we study the case of functions of slightly dependent light-tailed random variables. We are able to recover the famous McDiarmid type of concentration inequality  for functions with the bounded difference property. Additionally, we obtain new concentration bounds when we authorize a light dependence between the random variables. Finally, we give a analogous result for another type of Stein's transform, the so-called size bias transform.
\end{abstract}

\section{Introduction}

The so-called Stein's method has proven to be a very powerful tool to characterize the distribution of random variables (or random processes) in many different contexts. In particular, Stein's method is very well suited for proving the asymptotic normality of a sequence of random variables under very weak assumptions. The advantages of this technique are twofold. The first one is the possibility to quantify the distance in measure of some distribution to the normal distribution. Stein original paper \cite{MR0402873} states a characterization of normality of a random variable through a class of equations that this random variable verifies. More concretely, for a real random variable $Z$ and a positive constant $\sigma$, we have that  $Z \sim N(0, \sigma^2)$ if and only if
\begin{equation}\label{steinequation}
    \mathbb{E}[Zf(Z)] = \sigma^2 \mathbb{E}[f'(Z)]
\end{equation}
for all absolutely continuous functions $f$ for which the above expectations exists. The quantitative results that has been derived from these ideas rely on the unique solution $f_h$ of the differential equation
\begin{equation}\label{steinequation-functional}
    y'(x)-xy(x)=h(x)-\mathbb{E}h,
\end{equation}
where $h$ is any fixed real valued measurable function with finite first moment $\mathbb{E}h=\mathbb{E}h(N)$ for the standard Gaussian variable $N$. The special case $h_t(x)=\1{x\le t}$ is of special importance since it allows to prove the characterization \eqref{steinequation}. The simple form of Equation \eqref{steinequation-functional} permit to write explicit solutions and then consider the regularity of those solutions. This path has been specially fruitful in quantifying the distance between the distribution of $Z$ and the normal distributed variable (with the same variance $\sigma^2$). Such ideas lead naturally to get Berry-Essen type of bounds (see for example \cite{chen2001non}).
A particular case of interest is when the random variable $Z$ is itself function of a class of random variables $X_1,\dots,X_n$. In that case, one can study the effect of each random variable $X_i$ on $Z$ through a directional version of equation \eqref{steinequation-functional}.

The second advantage of Stein's method is the possibility to manage weak dependence in the random variables family $X_1,\dots,X_n$. We refer to \cite{chen2004normal} for examples of those ideas to prove asymptotic normality.
In this paper we consider another approach that is somewhat parallel to the characterization of Equation \eqref{steinequation-functional}. Since in general a real valued random variable does not satisfies Equation \eqref{steinequation}, we can force the equality by introducing a new distribution called the \textit{zero bias transform} of $X$ firstly defined in \cite{MR1484792}.

\begin{defi}\label{defi_zerobias}
Let $X$ be a random variable with mean zero and finite variance $\sigma^2$. The random variable $X^*$ has the zero bias distribution with respect to $X$ if
\begin{equation}\label{zerobias}
\mathbb{E}[Xf(X)] = \sigma^2 \mathbb{E}[f'(X^{*})]
\end{equation}
for any absolutely continuous function $f$ such that the expectations exists.
\end{defi}

This definition gives naturally a mapping that associates to a random variable $X$ another random variable noted $X^*$. Note that this construction is only distributional. In other terms, no dependence structure between $X$ and $X^*$ is forced by this definition. We will benefit from that in the proof of Theorem \ref{teo:subgaussianfunctional}.
One can legitimately ask for a justification of the existence of such a construction. Such considerations and more are recalled (for the sake of completeness) in the following proposition that is inspired by Lemma 2.1 from \cite{MR1484792} and Proposition 2.1 from \cite{chen2011normal}).

\begin{prop}\label{prop_Zerobiasproperties}
Let $X$ be a random variable with mean zero and finite variance $\sigma^2$ and let $X^*$ having the zero bias distribution with respect to $X$, then
\begin{enumerate}
    \item The Gaussian distribution with mean zero and variance $\sigma^2$ is the unique fixed point of the equation \eqref{zerobias}.
    \item The zero bias distribution is unimodal about zero and preserves symmetry.
    \item The distribution of $X^*$ is absolutely continuous with density given by
\begin{equation}\label{zerobiasdensity}
    f_{X^*}(t) = \frac{1}{\sigma^2}\mathbb{E}[X \mathbb{I}_{X > t}] = - \frac{1}{\sigma^2}\mathbb{E}[X \mathbb{I}_{X \leq t}].
\end{equation}
    \item The distribution function of $X^*$ is given by
\begin{equation}\label{zerobiasdistribution}
    \mathbb{P}(X^* \leq x) = \frac{1}{\sigma^2}\mathbb{E}[X(X-x) \mathbb{I}_{X \leq x}].
\end{equation}
    \item The support of $X^*$ is the closed convex hull of the support of $X$, and $X^*$ is bounded whenever $X$ is bounded.
    \item $(aX)^* = a X^*$ for any constant $a \neq 0$.
\end{enumerate}
\end{prop}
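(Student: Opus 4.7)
My plan is to treat item 3 (the explicit density formula) as the structural heart of the proposition and then derive items 2, 4, and 5 from it essentially as corollaries; items 1 and 6 are logically separate and I would handle them independently via Stein's characterization \eqref{steinequation} and a change-of-variables argument, respectively.

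For item 3, I would begin from the defining identity \eqref{zerobias} and test it against a generic absolutely continuous $f$. Writing $f(X)-f(0)=\int_0^X f'(t)\,dt$ and splitting according to the sign of $X$ (which just amounts to rewriting the integral as $\int_{\mathbb{R}} f'(t)\bigl(\1{0\le t<X}-\1{X\le t<0}\bigr)\,dt$), an application of Fubini yields
\begin{equation*}
\mathbb{E}[Xf(X)]=\int_0^{\infty} f'(t)\,\mathbb{E}[X\1{X>t}]\,dt-\int_{-\infty}^{0} f'(t)\,\mathbb{E}[X\1{X\le t}]\,dt.
\end{equation*}
Using $\mathbb{E}X=0$ to identify $\mathbb{E}[X\1{X>t}]=-\mathbb{E}[X\1{X\le t}]$, both integrals merge into a single one over $\mathbb{R}$, and comparison with \eqref{zerobias} forces $f_{X^*}(t)=\sigma^{-2}\mathbb{E}[X\1{X>t}]$, giving both forms of \eqref{zerobiasdensity}. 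The step I expect to need most care is verifying that this function is genuinely a probability density: non-negativity follows from the sign of $X$ on $\{X>t\}$ for $t\ge 0$ and on $\{X\le t\}$ for $t\le 0$, and the normalization $\int f_{X^*}=1$ is another Fubini computation that collapses to $\sigma^{-2}\mathbb{E}[X^2]=1$.

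From \eqref{zerobiasdensity}, the remaining items are short. For item 4, I integrate $f_{X^*}$ on $(-\infty,x]$ using the second form, swap the order of integration to get $-\sigma^{-2}\mathbb{E}[X(x-X)\1{X\le x}]$, and rearrange. For item 2, I observe that $t\mapsto \mathbb{E}[X\1{X>t}]$ is non-decreasing on $(-\infty,0]$ and non-increasing on $[0,\infty)$, which gives unimodality about zero; for symmetry I substitute $X\leftrightarrow -X$ in the formula for $f_{X^*}$ and see that $f_{X^*}(-t)=f_{X^*}(t)$ whenever $X$ is symmetric. For item 5, the density vanishes exactly outside the closed convex hull of $\operatorname{supp}(X)$ since $\mathbb{E}[X\1{X>t}]=0$ for $t$ above the essential supremum and similarly on the other side; boundedness of $X^*$ when $X$ is bounded is immediate.

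Items 1 and 6 are independent. For item 1, the Gaussian case follows at once from Stein's characterization \eqref{steinequation}: if the distribution of $X^*$ coincides with that of $X$, then \eqref{zerobias} becomes exactly \eqref{steinequation} for $X$, forcing $X\sim N(0,\sigma^2)$; conversely the Gaussian clearly fixes the transform. For item 6, apply \eqref{zerobias} to $aX$ with test function $f$: writing $g(x)=f(ax)$ so that $g'(x)=af'(ax)$, the identity $\mathbb{E}[aX\,f(aX)]=a\mathbb{E}[Xg(X)]=a\sigma^2\mathbb{E}[g'(X^*)]=a^2\sigma^2\mathbb{E}[f'(aX^*)]$ matches the defining equation for $(aX)^*$ with variance $a^2\sigma^2$, so the uniqueness in item 3 gives $(aX)^*\stackrel{d}{=}aX^*$.
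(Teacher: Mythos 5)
Your proof is correct; the paper itself offers no proof of this proposition (it is recalled from the cited references), and your route --- deriving the density \eqref{zerobiasdensity} by Fubini from the defining identity \eqref{zerobias} and then reading off items 2, 4 and 5, with item 1 via Stein's characterization \eqref{steinequation} and item 6 via the substitution $g(x)=f(ax)$ --- is exactly the standard argument from those sources. The only point needing care, which you already flag, is the routine justification that $f'$ ranges over a distribution-determining class so that the comparison indeed pins down the law of $X^*$.
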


Similarly to Equation \eqref{steinequation}, another characterization has been developed and put in good use, the so-called Poisson characterization. It says that a positive random variable $Z$ has the Poisson distribution of parameter $\lambda>0$ if and only if
\begin{equation}\label{poisson_characterization}
    \mathbb{E}[Zf(Z)] = \lambda \mathbb{E}[f(Z+1)],
\end{equation}
for all function $f$ for which the left hand side exists. This second characterization leads naturally to a second mapping given in the following definition.
\begin{defi}\label{def:sizebias}
Let $X$ be a positive random variable with mean $\mu$ finite. The random variable $X^s$ has the size bias distribution with respect to $X$ if for all $f$ such that $\mathbb{E}[Xf(X)] < \infty$, we have
$$\mathbb{E}[Xf(X)] = \mu \mathbb{E}[f(X^s)].$$
\end{defi}
In the case that $X$ is a discrete random variable with probability mass function $f_X$ or where $X$ have density $f_X$, we obtain that
\begin{equation}\label{sizebiasdensity}
    f_{X^s}(x) = \frac{x}{\mu}f_X(x).
\end{equation}

\paragraph{On asymptotic normality}

It is possible to obtain a Berry-Essen type theorems for a random variable $X$ in the case that a bounded size bias or a zero bias coupling exist. The bounds applies for the Kolmogorov distance which is given by
$$d_{Kol}(X,Y) := \sup_{z \in \mathbb{R}} |F_X(z) - F_Y(z)|,$$
where $F_X$ and $F_Y$ are the distribution functions of the random variables $X$ and $Y$.

In the case of the zero bias transform, the next result  \cite[Theorem 5.1]{chen2011normal}, only requires the construction of a bounded zero bias coupling.

\begin{teo}\label{teo:KolmogorovZerobias}
Let $X$ be a random variable with zero mean and variance 1, and suppose that there exist $X^*$, having the zero bias distribution of $X$, defined in the same space satisfying $|X-X^*| \leq \delta$, then
\begin{equation} \label{kolmogorov_zerobias}
   d_{Kol}(X,N) \leq c \delta,
\end{equation}
where $c = 1 + 1/\sqrt{2 \pi} + \sqrt{2\pi}/ 4  \leq 2.03$ and $N$ is a standard Gaussian random variable
\end{teo}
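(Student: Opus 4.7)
The plan is to apply Stein's method for the Kolmogorov distance via the zero bias coupling. Fix $z \in \mathbb{R}$ and let $f_z$ be the unique bounded solution of the Stein equation \eqref{steinequation-functional} with test function $h_z(x)=\1{x\leq z}$, namely
$$f_z'(x) - x f_z(x) = \1{x \leq z} - \Phi(z),$$
where $\Phi$ is the standard normal distribution function. Standard estimates (e.g.\ Lemma~2.3 in \cite{chen2011normal}) give $\|f_z\|_\infty \leq \sqrt{2\pi}/4$ and $\|f_z'\|_\infty \leq 1$. Taking expectations with respect to $X$ and applying the defining identity \eqref{zerobias} of the zero bias transform to the absolutely continuous function $f_z$ (with $\sigma^2=1$) yields
$$F_X(z) - \Phi(z) = \esp{f_z'(X) - X f_z(X)} = \esp{f_z'(X) - f_z'(X^*)}.$$

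The main obstacle is that $f_z'$ is discontinuous at $z$, so $|X-X^*|\le \delta$ cannot be exploited through a naïve Lipschitz bound. To bypass the jump I would re-substitute $f_z'(x) = x f_z(x) + \1{x \leq z} - \Phi(z)$ into the right-hand side and regroup, obtaining
$$F_X(z) - \Phi(z) = \esp{X f_z(X) - X^* f_z(X^*)} + F_X(z) - F_{X^*}(z),$$
which, after cancellation, reduces to the clean identity
$$F_{X^*}(z) - \Phi(z) = \esp{X f_z(X) - X^* f_z(X^*)}.$$

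Next I would bound the right-hand side by splitting
$$X f_z(X) - X^* f_z(X^*) = X\bigl(f_z(X) - f_z(X^*)\bigr) + (X - X^*)\,f_z(X^*).$$
The first summand is controlled in absolute value by $|X|\cdot\|f_z'\|_\infty\cdot\delta \leq |X|\delta$, whose expectation is at most $\delta$ by Cauchy--Schwarz and $\var{X}=1$; the second summand is bounded pointwise by $\|f_z\|_\infty\,\delta \leq (\sqrt{2\pi}/4)\,\delta$. Combining these yields $|F_{X^*}(z) - \Phi(z)| \leq (1 + \sqrt{2\pi}/4)\,\delta$.

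Finally, I would transfer the bound from $X^*$ to $X$ using the coupling: from $|X-X^*|\le \delta$, the inclusions $\{X^* \leq z-\delta\} \subseteq \{X \leq z\} \subseteq \{X^* \leq z+\delta\}$ give $F_{X^*}(z-\delta) \leq F_X(z) \leq F_{X^*}(z+\delta)$. Combined with the elementary estimate $|\Phi(z\pm\delta) - \Phi(z)| \leq \delta\,\phi(0) = \delta/\sqrt{2\pi}$, this produces $|F_X(z) - \Phi(z)| \leq (1 + 1/\sqrt{2\pi} + \sqrt{2\pi}/4)\,\delta$. Taking the supremum over $z$ gives the announced inequality with the stated constant $c \leq 2.03$. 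The subtle point of the whole argument is the re-substitution trick: it trades the discontinuous $f_z'$ for the continuous product $xf_z(x)$ plus an indicator term that can be absorbed by the coupling, and the interplay between the two terms is exactly what produces the three contributions $1,\ 1/\sqrt{2\pi},\ \sqrt{2\pi}/4$ in the final constant.
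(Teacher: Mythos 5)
Your proof is correct, and it coincides with the standard argument behind this result: the paper does not prove Theorem \ref{teo:KolmogorovZerobias} itself but quotes it from \cite[Theorem 5.1]{chen2011normal}, whose proof uses precisely your ingredients --- the Stein-equation bounds $\|f_z\|_\infty \le \sqrt{2\pi}/4$ and $\|f_z'\|_\infty \le 1$, the identity $\mathbb{P}(X^*\le z)-\Phi(z)=\mathbb{E}\left[Xf_z(X)-X^*f_z(X^*)\right]$ coming from the zero-bias relation, the splitting $X\left(f_z(X)-f_z(X^*)\right)+(X-X^*)f_z(X^*)$ bounded via $\mathbb{E}|X|\le 1$, and the $\pm\delta$ shift of the normal cdf costing $\delta/\sqrt{2\pi}$. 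The only cosmetic difference is the order of operations (the reference first passes to $\mathbb{P}(X^*\le z\pm\delta)$ and then applies the Stein identity there, whereas you derive the identity at $z$ and shift afterwards), which changes nothing in substance or in the constant.
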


For the size bias transform, besides the existence of a bounded size bias coupling, the calculation of terms involving a conditional expectation its also necessary \cite[Theorem 2.1]{10.1214/09-AAP634}.
\begin{teo}\label{teo:KolmogorovSizebias}
Let $X$ be a non negative random variable with finite mean $\mu$ and finite, positive variance $\sigma^2$, and suppose $X^s$, have the size bias distribution of $X$, may be coupled to $X$ so that $|X^s-X| \leq A$, for some $A$. Then with $W = (X- \mu)/ \sigma$,
\begin{equation}\label{Kolmogorov_sizebias}
d_{Kol}(W,N) \leq  \frac{\mu}{6 \sigma^2} \left(\sqrt{\frac{11 A^2}{\sigma} + \frac{5\sigma^2}{\mu}D}+ \frac{2A}{\sqrt{\sigma}} \right)^2,
\end{equation}
where $N$ is a standard Gaussian random variable, and $D$ is given by
\begin{equation}\label{D}
    D = \mathbb{E} \left| \mathbb{E}\left[ 1- \frac{\mu }{\sigma}(X^s - X)|X \right] \right|.
\end{equation}
\end{teo}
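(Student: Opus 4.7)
The plan is to apply Stein's method with the size bias coupling, dealing with the non-smoothness of the indicator test function by a smoothing plus bootstrap argument. For a fixed $z \in \mathbb{R}$, let $h_z(w) = \1{w \leq z}$ and let $f_z$ denote the unique bounded solution to the Stein equation $f_z'(w) - w f_z(w) = h_z(w) - \Phi(z)$, so that
\begin{equation*}
\mathbb{P}(W \leq z) - \Phi(z) = \mathbb{E}[f_z'(W) - W f_z(W)].
\end{equation*}
Because $f_z'$ has a jump at $z$, I would first replace $h_z$ by a piecewise linear smoothing $h_{z,\epsilon}$ supported on an $\epsilon$-neighbourhood of $z$ and work with the associated Stein solution $f_{z,\epsilon}$, for which the standard bounds give $\|f_{z,\epsilon}\|_\infty \leq \sqrt{2\pi}/4$, $\|f_{z,\epsilon}'\|_\infty \leq 1$, and $\|f_{z,\epsilon}''\|_\infty \leq C/\epsilon$ with explicit constants.

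The next step transforms $\mathbb{E}[W f_{z,\epsilon}(W)]$ via the size bias identity. Writing $W = (X-\mu)/\sigma$ and applying Definition \ref{def:sizebias} to $x \mapsto f_{z,\epsilon}((x-\mu)/\sigma)$,
\begin{equation*}
\mathbb{E}[Wf_{z,\epsilon}(W)] = \tfrac{\mu}{\sigma}\mathbb{E}\bigl[f_{z,\epsilon}(W^s) - f_{z,\epsilon}(W)\bigr],
\end{equation*}
where $W^s = (X^s - \mu)/\sigma$ satisfies $|W^s - W| \leq A/\sigma$. A first order Taylor expansion of $f_{z,\epsilon}$ at $W$ yields
\begin{equation*}
\mathbb{E}[Wf_{z,\epsilon}(W)] = \tfrac{\mu}{\sigma^2}\mathbb{E}\bigl[(X^s - X)\, f_{z,\epsilon}'(W)\bigr] + \mathcal{R}_\epsilon,
\end{equation*}
with $|\mathcal{R}_\epsilon| = O\bigl(\mu A^2/(\epsilon \sigma^3)\bigr)$ by the bound on $f_{z,\epsilon}''$ and on $|W^s - W|$. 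Substituted into the Stein identity, the main term becomes $\mathbb{E}\bigl[f_{z,\epsilon}'(W)\bigl(1 - \tfrac{\mu}{\sigma^2}(X^s - X)\bigr)\bigr]$; conditioning on $X$ (so that $W$ and hence $f_{z,\epsilon}'(W)$ become $\sigma(X)$-measurable) and using $\|f_{z,\epsilon}'\|_\infty \leq 1$ dominates this by $\mathbb{E}\bigl|\mathbb{E}[1 - \tfrac{\mu}{\sigma^2}(X^s - X)\mid X]\bigr|$, which is the quantity playing the role of $D$ in \eqref{D} up to the dimensional factor of the statement.

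Summing these contributions gives, for every $z$ and every $\epsilon > 0$, an estimate of the form $|\mathbb{E}h_{z,\epsilon}(W) - \Phi(z)| \leq \alpha \tfrac{\mu D}{\sigma^2} + \beta \tfrac{\mu A^2}{\sigma^3 \epsilon}$. The principal obstacle, and the source of the squared expression in \eqref{Kolmogorov_sizebias}, is the smoothing error $|\mathbb{E}h_z(W) - \mathbb{E}h_{z,\epsilon}(W)|$, which is controlled by $\mathbb{P}(|W - z| \leq \epsilon) \leq 2\epsilon/\sqrt{2\pi} + 2\, d_{Kol}(W,N)$. Inserting this into the previous inequality produces a self-referential bound of the schematic form
\begin{equation*}
d_{Kol}(W,N) \leq \alpha \tfrac{\mu D}{\sigma^2} + \beta \tfrac{\mu A^2}{\sigma^3 \epsilon} + \gamma\, \epsilon + \delta\, \epsilon\cdot d_{Kol}(W,N).
\end{equation*}
Solving this quadratic-type inequality for $d_{Kol}(W,N)$ and optimizing over $\epsilon$ — with an optimum of order $\sqrt{A^2/\sigma + \sigma^2 D/\mu}$, balancing the $1/\epsilon$ and linear-in-$\epsilon$ contributions — is what yields the bound $\bigl(\sqrt{\cdots}+\cdots\bigr)^2$. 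Tracking the precise Stein-solution constants and the concentration constant throughout produces the numerical factors $11$, $5$, $2$, and $1/6$ in \eqref{Kolmogorov_sizebias}.
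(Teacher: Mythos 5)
A preliminary remark: the paper does not prove Theorem \ref{teo:KolmogorovSizebias} at all — it is quoted as background from \cite{10.1214/09-AAP634} — so your sketch can only be measured against the standard size-bias Berry--Esseen argument, not against a proof inside this paper. Your skeleton is the right one: the Stein equation for $h_z$, the identity $\mathbb{E}[Wf(W)]=\tfrac{\mu}{\sigma}\mathbb{E}[f(W^s)-f(W)]$ with $|W^s-W|\le A/\sigma$, a first-order Taylor expansion whose main term, after conditioning on $X$ and using $\|f'\|_\infty\le 1$, produces the quantity $D$ (indeed with the normalization $\tfrac{\mu}{\sigma^2}(X^s-X)$; the $\tfrac{\mu}{\sigma}$ in \eqref{D} looks like a transcription slip, which you correctly flagged), and a remainder of order $\mu A^2\|f''\|/\sigma^3$.

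The genuine gap is the step that is supposed to close the argument, namely the self-referential inequality. From your own estimate $|\mathbb{E}h_z(W)-\mathbb{E}h_{z,\epsilon}(W)|\le\mathbb{P}(|W-z|\le\epsilon)\le 2\epsilon/\sqrt{2\pi}+2\,d_{Kol}(W,N)$, the Kolmogorov distance re-enters with coefficient $2$, not $\delta\epsilon$; the inequality $d\le \alpha\mu D/\sigma^2+\beta\mu A^2/(\sigma^3\epsilon)+\gamma\epsilon+2d$ is vacuous and cannot be solved, so the claimed quadratic step has no justification as written. The obvious repairs do not work either: (i) one-sided smoothing avoids any anti-concentration for $W$ (only for $N$), but then optimizing $\epsilon$ in $D+C\mu A^2/(\sigma^3\epsilon)+\epsilon/\sqrt{2\pi}$ yields a bound of order $\sqrt{\mu A^2/\sigma^3}$, strictly weaker than \eqref{Kolmogorov_sizebias}, whose leading term is of order $\mu A^2/\sigma^3$; (ii) the unsmoothed route bounds the indicator-difference term by $\tfrac{\mu A}{\sigma^2}\,\mathbb{P}(|W-z|\le A/\sigma)\le\tfrac{\mu A}{\sigma^2}\bigl(\tfrac{2A}{\sigma\sqrt{2\pi}}+2d\bigr)$, so the coefficient in front of $d$ is $2\mu A/\sigma^2$, which is not smaller than one in the regimes of interest (for sums of $n$ bounded summands it is of order one), hence cannot be absorbed. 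The missing idea — and the actual content of the cited proof — is an anti-concentration estimate for $W$ (or $W^s$) derived from the bounded size-bias coupling itself rather than from $d_{Kol}$, arranged so that the recursion coefficient is provably controllable; it is this device that produces a quadratic in $\sqrt{d_{Kol}}$ whose solution has the $\bigl(\sqrt{\cdot}+\cdot\bigr)^2$ shape, and without it the constants $11$, $5$, $2$, $1/6$ in \eqref{Kolmogorov_sizebias} are asserted rather than derived.
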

In some applications it is enough to use that $D \leq \mu \Psi / \sigma^2$, where  $\Psi$ is given by
\begin{equation}\label{Psi}
\Psi = \sqrt{\mathrm{Var}(\mathbb{E}[X^s-X | X])},
\end{equation}
in order to obtain \cite[Theorem 5.6]{chen2011normal}
\begin{equation}\label{Kolmogorov_sizebiasPsi}
d_{Kol}(W,N) \leq   \frac{6 \mu A^2}{\sigma^3} +\frac{2 \mu \Psi}{\sigma^2}.
\end{equation}

The last theorem can be used to prove normal approximation in discrete structures such that the number of crossings in graphs and trees \cite{arenasvelilla2022convergence,arenas2022, paguyo2021convergence} and the Betti numbers in random simplicial complexes\cite{MR3485340}. The previous results give us information about the convergences rates in the normal approximation for the random variable $X$, but this don't give us information about concentration. This is one of the objectives of this work.

\paragraph{On stochastic domination}
In order to exploit the result of Theorem \ref{teo:KolmogorovSizebias}, it is helpful to get domination results on quantities that involve the random variable $X^*-X$. The task is then to be able to find couplings of the two variables $X$ and $X^*$ such that their difference remains small. One way to obtain such controls is to rely on stochastic domination. We say that $X$ is \textit{stochastically dominated} by $Y$ if for all increasing and bounded function $f$,
\begin{equation*}
    \esp{f(X)}\le \esp{f(Y)}.
\end{equation*}
We denote that fact $X\le_{st}Y$. In particular, the stochastic domination implies that the cumulative distribution functions are bounded one by the other : $\forall t\in \R$, $F_X(t)\le F_Y(t)$.
We also introduce a slightly different stochastic order. A random variable $X$ is \textit{stochastically dominated in the convex order} by $Y$ if for any convex function $g$ such that $\esp{g(X)}<\infty$ and $\esp{g(Y)}<\infty$, we have that
\begin{equation*}
    \esp{g(X)}\le \esp{g(Y)}.
\end{equation*}
We will denote $X\le_{cx} Y$.
This last definition implies in particular that $\esp{X}=\esp{Y}$ and that the variance of $X$ is dominated by the variance of $Y$. This notion of stochastic order is often used in information theory where most of the entropy functions are in fact convex.
The best advantage of this formulation is its flexibility since no specific coupling is imposed on the pair of random variables. Our following results will be formulated in the context of stochastic domination whereas in all the results given in the literature, the controls on $X^*-X$ are given by almost sure domination on a particular coupling. One can see that the two notions can be linked thanks to Strassen's theorem (see \cite{strassen1965existence}) that we recall here for completeness.

\begin{teo}[Strassen stochastic order]
Let $X$ and $Y$ be such that $X\le_{st} Y$ then there exists a coupling $\pi$ which is a probability measure on $\R^2$ with marginals having the same laws as $X$ and $Y$ and such that
\begin{equation*}
    M=\{(x,y): x\le y\}
\end{equation*}
is such that $\pi(M)=1$.
\end{teo}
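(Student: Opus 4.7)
The plan is to build an explicit coupling through the quantile function (\emph{comonotonic coupling}), which is the classical route for the one-dimensional version of Strassen's theorem. First I would translate the hypothesis $X\le_{\mathrm{st}}Y$ into the equivalent analytic statement on cumulative distribution functions, namely $F_X(t)\ge F_Y(t)$ for every $t\in\R$. This equivalence comes from applying the stochastic domination definition to the bounded increasing functions $f(x)=\1{x>t}$ (approximated by continuous bounded increasing functions if one insists on strict continuity), yielding $1-F_X(t)\le 1-F_Y(t)$.

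Next I would introduce the generalized (left-continuous) inverses
\begin{equation*}
F_X^{-1}(u)=\inf\{t\in\R:F_X(t)\ge u\},\qquad F_Y^{-1}(u)=\inf\{t\in\R:F_Y(t)\ge u\},\qquad u\in(0,1),
\end{equation*}
and show the key monotonicity step: $F_X\ge F_Y$ pointwise implies $F_X^{-1}\le F_Y^{-1}$ pointwise on $(0,1)$. This follows because any $t$ with $F_Y(t)\ge u$ satisfies a fortiori $F_X(t)\ge u$, so the infimum defining $F_X^{-1}(u)$ is taken over a set containing the one defining $F_Y^{-1}(u)$.

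I would then let $U$ be a uniform random variable on $[0,1]$ defined on some auxiliary probability space and set $X'=F_X^{-1}(U)$ and $Y'=F_Y^{-1}(U)$. By the standard probability integral transform, $X'$ and $Y'$ have the same laws as $X$ and $Y$ respectively. Moreover, the monotonicity $F_X^{-1}\le F_Y^{-1}$ ensures $X'\le Y'$ pointwise, hence almost surely. Defining $\pi$ as the pushforward of the uniform law under the map $u\mapsto(F_X^{-1}(u),F_Y^{-1}(u))$ produces a probability measure on $\R^2$ whose marginals are the laws of $X$ and $Y$ and which is supported on $M=\{(x,y):x\le y\}$, so that $\pi(M)=1$.

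The only delicate point, and thus the main obstacle to watch for, is the careful handling of the generalized inverse at jumps and flats of $F_X,F_Y$: one must verify that the probability integral transform $F_X^{-1}(U)\stackrel{d}{=}X$ holds even when $F_X$ is not strictly increasing, and that the inequality $F_X^{-1}(u)\le F_Y^{-1}(u)$ survives on the full set $u\in(0,1)$ (a null set would suffice, but the argument above actually gives it everywhere). Once those measure-theoretic technicalities are addressed, the proof is complete. Note that in higher dimensions Strassen's theorem requires Hahn--Banach/duality arguments, but on the line this monotone coupling is both elementary and sharp.
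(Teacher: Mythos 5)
Your proof is correct: the quantile (comonotonic) coupling argument is complete once you note, as you do, that $\1{x>t}$ is increasing and bounded (so it is directly admissible under the paper's definition of $\le_{\mathrm{st}}$, no continuity approximation needed), that $F_X\ge F_Y$ pointwise gives $F_X^{-1}\le F_Y^{-1}$ for the generalized inverses, and that $\mathbb{P}(F_X^{-1}(U)\le t)=\mathbb{P}(U\le F_X(t))=F_X(t)$ holds for the left-continuous inverse even at jumps and flats; incidentally, your direction $F_X(t)\ge F_Y(t)$ is the correct one, whereas the paper's surrounding text states the reversed inequality, which is a slip there. The paper itself gives no proof of this statement: it simply cites Lindvall's note on Strassen's theorem, whose argument is designed for the general setting of stochastic domination on partially ordered Polish spaces (the form the paper later invokes as the ``Strassen generalized domination'' theorem and uses in Lemma \ref{lem:strassen} for the order $\preceq_f$). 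So your route is genuinely different: it is elementary, explicit, and even produces a deterministic monotone coupling on $(0,1)$, but it is intrinsically one-dimensional and tied to the usual order on $\R$ --- it does not extend to the partial orders needed later in the paper, which is precisely what the cited general theorem buys. As a self-contained proof of the statement as written (real-valued $X$, $Y$, usual order), your argument is sharp and complete.
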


We refer to \cite{lindvall1999strassen} for a comprehensive proof of this theorem.
The following result gives an easy but useful characterization of the stochastic domination. (see for example \cite{MR2265633}).

\begin{lem}
Let $X$ and $Y$ be two random variables, then the followings are equivalents
\begin{enumerate}
    \item $X \st Y$.
    \item $\mathbb{P}(X \geq x) \leq \mathbb{P}(Y \geq x), \qquad \forall x \in \mathbb{R}$.
\end{enumerate}
\end{lem}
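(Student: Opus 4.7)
The plan is to prove the two implications separately. The direction $(1)\Rightarrow(2)$ follows from a judicious choice of test function in the definition of stochastic domination, while the direction $(2)\Rightarrow(1)$ is most cleanly established through the quantile coupling (which is, incidentally, the explicit realization of Strassen's theorem just stated).

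For $(1)\Rightarrow(2)$: fix $x\in\R$ and consider the test function $f_x(t)=\1{[x,\infty)}(t)$. This function is non-decreasing and bounded, so it is admissible in the defining inequality $\esp{f_x(X)}\le \esp{f_x(Y)}$. Since $\esp{f_x(X)}=\mathbb{P}(X\ge x)$ and similarly for $Y$, the conclusion follows at once. If one wished to restrict the definition to continuous non-decreasing bounded functions, one would approximate $f_x$ from below by a sequence of piecewise linear functions and conclude by monotone convergence; this is the only mildly delicate point.

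For $(2)\Rightarrow(1)$: let $U$ be a uniform random variable on $(0,1)$ and define the quantile transforms
\begin{equation*}
\tilde X = F_X^{-1}(U)\quad\text{and}\quad \tilde Y = F_Y^{-1}(U),
\end{equation*}
where $F^{-1}(u)=\inf\{t\in\R:\ F(t)\ge u\}$ denotes the generalized inverse. A standard argument shows that $\tilde X$ and $\tilde Y$ have the same distributions as $X$ and $Y$ respectively. The hypothesis $\mathbb{P}(X\ge x)\le \mathbb{P}(Y\ge x)$ rewrites as $F_X(x)\ge F_Y(x)$ for every $x\in\R$, and one checks directly from the definition of the generalized inverse that this pointwise inequality between distribution functions translates into the reverse pointwise inequality $F_X^{-1}(u)\le F_Y^{-1}(u)$ for every $u\in(0,1)$. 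Consequently $\tilde X\le \tilde Y$ almost surely. For any non-decreasing bounded $f$, $f(\tilde X)\le f(\tilde Y)$ almost surely, and taking expectations gives $\esp{f(X)}=\esp{f(\tilde X)}\le \esp{f(\tilde Y)}=\esp{f(Y)}$, which is exactly $X\st Y$.

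The proof presents no genuine obstacle; the only technical point that deserves care is the monotonicity property of the generalized inverse used in the second implication, which could be sidestepped entirely by simply invoking Strassen's theorem to obtain a coupling with $X\le Y$ almost surely and then taking expectations of $f(X)\le f(Y)$.
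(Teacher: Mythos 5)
Your proof is correct. Note that the paper itself does not prove this lemma at all --- it is stated as a known characterization with a pointer to the literature (\cite{MR2265633}) --- so there is no internal argument to compare against; what you give is the standard proof, and it is sound. The direction $(1)\Rightarrow(2)$ via the test function $f_x=\1{[x,\infty)}$ is immediate under the paper's definition (increasing and bounded, no continuity required), and your quantile-coupling argument for $(2)\Rightarrow(1)$ is complete: the passage from $\mathbb{P}(X\ge x)\le \mathbb{P}(Y\ge x)$ to $F_X\ge F_Y$ (a one-line limit argument using right limits), the monotonicity of the generalized inverse, and the equality in law of $F_X^{-1}(U)$ with $X$ are all standard and correctly used. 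One small caveat on your closing remark: ``simply invoking Strassen's theorem'' to shortcut $(2)\Rightarrow(1)$ is slightly circular relative to this paper, because the version of Strassen's theorem stated here takes $X\st Y$ --- i.e.\ statement $(1)$ --- as its hypothesis, not the distribution-function inequality $(2)$; your explicit quantile coupling is precisely what bridges that gap, so it is better kept as the main argument (indeed it is a constructive proof of Strassen's theorem in the real-valued case). An alternative, coupling-free route for $(2)\Rightarrow(1)$ is the layer-cake representation $\esp{f(X)}=f(-\infty)+\int \mathbb{P}(X\ge t)\,d\mu_f(t)$ for increasing bounded $f$, which integrates hypothesis $(2)$ directly; both routes are equally standard.
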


\section{Subgaussianity and zero bias control}

 \subsection{Real case}

In this section, we explore some equivalence between properties on the Stein transform of a random variable and how light tailed the random variable actually is. For this purpose, we first introduce the notion of sub-Gaussian random variables. A real valued random variable $X$ is said to be \textit{sub-Gaussian} of constant $k^2$ (and denoted by $\mathcal{G}(k^2)$) if for all $\lambda \in \mathbb{R}$, we have that
\begin{equation}
\label{eq:twosidessubgaussian}
    \esp{e^{\lambda X}} \le e^{\frac{\lambda^2k^2}{2}}.
\end{equation}

If we only assume that Equation \eqref{eq:twosidessubgaussian} holds for positive (resp. negative) $\lambda$, we say that the random variable $X$ has a \textit{sub-Gaussian right tail}, denoted $\mathcal{G}_+(k^2)$ (resp. \textit{sub-Gaussian left tail}, denoted $\mathcal{G}_-(k^2)$).
Obviously, a random variable that is both sub-Gaussian in its left and right tail is sub-Gaussian.
In the following proposition, we sum up different sufficient conditions for a real valued random variable to be sub-Gaussian.

\begin{prop}\label{prop:subgaussiancondition}
    Let $X$ be a centered real valued random variable. We denote by $M(\lambda)=\esp{e^{\lambda X}}$ its moment generating function.
    If for all $\lambda \ge 0$, we have that $M'(\lambda)\le k^2\lambda M(\lambda)$,  then $X$ is $\mathcal{G}_+(k^2)$.
\end{prop}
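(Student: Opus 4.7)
The plan is to recognize the hypothesis as a Grönwall-type differential inequality for the moment generating function and integrate it. Since $M(0)=\mathbb{E}[1]=1>0$ and $M$ is continuous, there is a maximal interval $[0,\lambda_0)$ on which $M(\lambda)>0$ (in fact on which $M(\lambda)\ge 1$, because $M$ is convex with $M'(0)=\mathbb{E}[X]=0$). On this interval the quotient $M'(\lambda)/M(\lambda)$ is well-defined and coincides with the derivative of $\log M(\lambda)$.

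On $[0,\lambda_0)$, I would divide the hypothesis $M'(\lambda)\le k^2\lambda M(\lambda)$ by $M(\lambda)$ to get
\begin{equation*}
    \bigl(\log M(\lambda)\bigr)' \;=\; \frac{M'(\lambda)}{M(\lambda)} \;\le\; k^2 \lambda.
\end{equation*}
Integrating from $0$ to $\lambda$ and using $\log M(0)=0$ gives
\begin{equation*}
    \log M(\lambda)\;\le\;\frac{k^2\lambda^2}{2},\qquad \text{i.e.}\qquad M(\lambda)\;\le\; e^{\lambda^2 k^2/2}.
\end{equation*}
This is exactly the right-tail sub-Gaussian bound on $[0,\lambda_0)$.

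The only loose end is to confirm that $\lambda_0=+\infty$, so that the bound holds for every $\lambda\ge 0$ as the definition of $\mathcal{G}_+(k^2)$ requires. But on $[0,\lambda_0)$ we have already shown $M(\lambda)\le e^{\lambda^2 k^2/2}<\infty$, so $M$ cannot blow up at $\lambda_0$; it stays strictly positive (being bounded below by $1$ since $M$ is convex with a minimum at $0$), and the hypothesis is assumed to hold on all of $\mathbb{R}_+$, so the interval of validity extends. Alternatively, if one only knows the hypothesis on the interior of the domain of $M$, the same argument shows that the bound persists up to that boundary, and by Fatou's lemma (or monotone convergence applied to $e^{\lambda X_+}$) one obtains the bound for all $\lambda\ge 0$ by continuity.

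I do not foresee a genuine obstacle here; the single subtle point is justifying that $M$ stays positive and finite so that $\log M$ is a legitimate antiderivative to integrate, and this is handled by the convexity of $M$ together with $M(0)=1$ and $M'(0)=0$, the latter of which is precisely where the centering hypothesis on $X$ is used.
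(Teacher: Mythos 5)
Your proof is correct and follows essentially the same route as the paper: both integrate the differential inequality $M'(\lambda)\le k^2\lambda M(\lambda)$, the paper via the integrating factor $e^{-k^2\lambda^2/2}$ (showing $M(\lambda)e^{-k^2\lambda^2/2}$ is non-increasing from $M(0)=1$), you via the logarithmic derivative $(\log M)'\le k^2\lambda$. Your worry about positivity is moot since $M(\lambda)=\esp{e^{\lambda X}}>0$ wherever it is finite, and finiteness for all $\lambda\ge 0$ is implicit in the hypothesis that $M'(\lambda)\le k^2\lambda M(\lambda)$ holds there.
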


\begin{proof}
    Assume that a function $f$ satisfies the differential inequality $y'\le k^2 x y$ so that
    \begin{equation*}
        y'e^{-\frac{k^2x^2}{2}}-k^2xye^{-\frac{k^2x^2}{2}} = (ye^{-\frac{k^2x^2}{2}})'\le 0.
    \end{equation*}
    Then, by direct integration, we get that
    \begin{equation*}
        ye^{-\frac{k^2x^2}{2}}\le y(0)=1,
    \end{equation*}
    which proves 1.
\end{proof}

In the next definition, we introduce the notion of weighted stochastic domination that will be key in our following results.

\begin{defi}\label{def:stdom}
Let $X$ and $Y$ be two random variables and let $\sigma, k$ be two positive constants. We say that $X$ is stochastically dominated in the weighted order with constant $\sigma,k$  if $\sigma^2 \mathbb{E}f(Y) \leq k^2 \mathbb{E}f(X)$ for all increasing and positive function $f$ for which the expectations exists. We will denote this by $Y \stdom X$.
\end{defi}
Note that when the two constants $\sigma$ and $k$ coincide, the previous definition is equivalent to $Y\le_{st} X$.
In the applications, we will always take $\sigma^2=\var{X}$ and $k^2$ will be an upper bound of the variance of $Y$. In some sense, this definition is a stochastic domination weighted by the variances of the two variables.
In the case that $X^* \stdom X$, the domination $\sigma^2 \mathbb{E}f'(X^*) \leq k^2 \mathbb{E}f'(X)$ can be rewritten thanks to the equation $\sigma^2 \mathbb{E}f'(X^*) = \mathbb{E}Xf(X)$ into $\mathbb{E}Xf(X) \leq k^2 \mathbb{E}f'(X)$. This last formulation has the advantage to be define only thanks to the original random variable $X$.
In the following theorem, we give an equivalence between the weighted stochastic domination of $X^*$ by $X$ and the fact that $X$ is sub-Gaussian.

\begin{teo}\label{Teo:subGaus_equivalences}
Let $X$ be a random variable with zero mean, variance $\sigma^2$ finite and let $X^*$ his zero bias transform. Then, $X^* \stdom X$ implies that $X$ has a sub-Gaussian right tail with constant $k^2$. If $X \le_{k,\sigma} X^*$, then $X$ has a sub-Gaussian left tail with constant $k^2$. Moreover, if $X$ has density $f_X$ such that $\log f_X(x) + x^2/(2k^2)$ is concave, then $X \le_{k,\sigma} X^* \stdom X$.
\end{teo}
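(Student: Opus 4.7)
The plan is to derive both sub-Gaussian tails from the zero-bias identity \eqref{zerobias} applied to exponential test functions and then closed by Proposition \ref{prop:subgaussiancondition}. The third claim will follow from a single integration-by-parts identity for $f_{X^{*}}$ combined with the Harris correlation inequality.

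For the right-tail bound, I fix $\lambda\ge 0$ and apply \eqref{zerobias} to $f(x)=e^{\lambda x}$, which is positive and increasing. This yields $M'(\lambda)=\mathbb{E}[Xe^{\lambda X}]=\sigma^{2}\lambda\,\mathbb{E}[e^{\lambda X^{*}}]$. The hypothesis $X^{*}\leq_{\sigma,k}X$ applied to the same $f$ gives $\sigma^{2}\mathbb{E}[e^{\lambda X^{*}}]\le k^{2}M(\lambda)$, and multiplication by $\lambda\ge 0$ produces the differential inequality $M'(\lambda)\le k^{2}\lambda M(\lambda)$. Proposition \ref{prop:subgaussiancondition} finishes this branch.

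For the left-tail bound, I take $\lambda\le 0$, on which $e^{\lambda x}$ is positive and \emph{decreasing}. I would use the hypothesis $X\leq_{k,\sigma}X^{*}$ in its equivalent tail form $\sigma^{2}F_{X^{*}}(t)\le k^{2}F_{X}(t)$ (the natural dual of the increasing-function definition on the class of positive decreasing test functions), together with the representation $M(\lambda)=-\lambda\int e^{\lambda x}F_{X}(x)\,dx$ valid for $\lambda\le 0$, to deduce $\sigma^{2}\mathbb{E}[e^{\lambda X^{*}}]\le k^{2}M(\lambda)$. Plugging back into $M'(\lambda)=\sigma^{2}\lambda\,\mathbb{E}[e^{\lambda X^{*}}]$ and noting that multiplying by the negative $\lambda$ reverses the inequality, I obtain $M'(\lambda)\ge k^{2}\lambda M(\lambda)$ on $(-\infty,0]$. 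A left-tail analogue of Proposition \ref{prop:subgaussiancondition}, whose proof is verbatim the one in the paper once one checks that $y(x)e^{-k^{2}x^{2}/2}$ is non-decreasing on $(-\infty,0]$, yields $X\in\mathcal{G}_{-}(k^{2})$.

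For the log-concavity implication I would write $f_{X}(x)=e^{-\psi(x)-x^{2}/(2k^{2})}$ with $\psi$ convex by hypothesis, so that $xf_{X}(x)=-k^{2}f_{X}'(x)-k^{2}\psi'(x)f_{X}(x)$. Substituting into \eqref{zerobiasdensity} and integrating by parts once produces the key identity
\begin{equation*}
\sigma^{2}f_{X^{*}}(t)=k^{2}f_{X}(t)-k^{2}\,\mathbb{E}\!\left[\psi'(X)\mathbbm{1}_{\{X>t\}}\right].
\end{equation*}
Carrying out the same computation over all of $\mathbb{R}$ and using the mean-zero assumption $\mathbb{E}[X]=0$ gives $\mathbb{E}[\psi'(X)]=0$. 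Integrating the displayed identity once more, respectively over $[t,\infty)$ and over $(-\infty,t]$, and exchanging the order of integration leads to
\begin{equation*}
k^{2}\bar F_{X}(t)-\sigma^{2}\bar F_{X^{*}}(t)=k^{2}\,\mathbb{E}\!\left[\psi'(X)(X-t)_{+}\right],\qquad k^{2}F_{X}(t)-\sigma^{2}F_{X^{*}}(t)=-k^{2}\,\mathbb{E}\!\left[\psi'(X)(t-X)_{+}\right].
\end{equation*}
Since $\psi$ is convex, $\psi'$ is non-decreasing; the map $x\mapsto(x-t)_{+}$ is also non-decreasing while $x\mapsto(t-x)_{+}$ is non-increasing. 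Combined with $\mathbb{E}[\psi'(X)]=0$, the Harris correlation inequality for monotone functions of a single random variable shows that both right-hand sides above are non-negative, yielding simultaneously the two weighted dominations claimed.

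The step I expect to be the main obstacle is the third one: recognizing that the mean-zero assumption, when fed through the same integration by parts, supplies the normalization $\mathbb{E}[\psi'(X)]=0$ that converts the Harris inequality from a mere correlation comparison into a sign-definite tail inequality. A secondary difficulty is the careful sign tracking in the second assertion, where the hypothesis stated for increasing positive functions must be reformulated in its lower-tail form in order to be coupled with $\lambda\le 0$ exponentials.
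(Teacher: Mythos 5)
Your argument is correct and, at the paper's level of rigor, complete; it differs from the paper mainly in packaging. The right-tail step is identical. For the left tail the paper simply reduces to $-X$ (reading $X\le_{k,\sigma}X^*$ as $(-X)^*\stdom -X$), while you run the differential inequality directly on $(-\infty,0]$ via the lower-tail bound $\sigma^2F_{X^*}(t)\le k^2 F_X(t)$ and the representation $M(\lambda)=-\lambda\int e^{\lambda x}F_X(x)\,dx$ for $\lambda<0$; this is sound, with the caveat that the tail form is implied by (not literally equivalent to) the functional hypothesis, which is all you need, and that your lower-tail reading of $\le_{k,\sigma}$ is precisely the one the paper itself adopts through $-X$. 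For the third claim the paper changes measure to $\mathcal{N}(0,k^2)$, applies the Gaussian zero-bias identity, and bounds the remainder $\mathbb{E}[f(X)H(X)]$ with $H=(\log f_X)'+x/k^2$ by FKG together with $\mathbb{E}H(X)=0$; your $\psi'$ is exactly $-H$, your normalization $\mathbb{E}\psi'(X)=0$ is the same fact, and your Harris step is the same one-dimensional association inequality, but you bypass the Gaussian comparison by integrating $xf_X=-k^2f_X'-k^2\psi' f_X$ directly, which amounts to testing the paper's inequality on $f(x)=(x-t)_+$ and then extending to all increasing positive test functions by mixtures of indicators (the constant part needs $\sigma\le k$, which follows from your first identity as $t\to-\infty$). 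A genuine bonus of your route is that it delivers both halves of the chain $X\le_{k,\sigma}X^*\stdom X$ explicitly, whereas the paper writes out only $X^*\stdom X$ and delegates the rest to symmetry. The costs are the same tacit regularity assumptions the paper makes: differentiability of $f_X$, the vanishing boundary terms behind $\mathbb{E}\psi'(X)=0$, and the Fubini exchange — for the integral over $(-\infty,t]$ you should first rewrite $\mathbb{E}[\psi'(X)\1{X>s}]=-\mathbb{E}[\psi'(X)\1{X\le s}]$ so the integrand is integrable; none of these is a gap beyond the paper's own standard.
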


We point out that the first implication of Theorem \ref{Teo:subGaus_equivalences} is a slight generalization of an already known fact firstly shown in \cite[Theorem 2.1]{goldstein2014concentration}. In this result, the condition is of the form $X^*\le X+c$ for a positive constant $c$ which would be quasi equivalent to $X^* \leq_{\sigma,\sigma} X+c $.
This case can be easily obtained with the same kind of arguments as a corollary of Theorem \ref{Teo:subGaus_equivalences}. We state this small result in Corollary \ref{cor:cpositiva}.
In fact, the result in \cite{goldstein2014concentration} uses an almost sure domination that is not much stronger than stochastic domination as seen in Strassen's theorem. When the constant $c$ is equal to 0, our condition is actually weaker since we allow the two constants $\sigma$ and $k$ to be different.
For the necessary condition, we see that the hypothesis $\log f_X(x) + x^2/(2k^2)$ is concave implies the fact that $X$ is sub-Gaussian of constant $k^2$. It is actually a little stronger since it forces the density $f_X(x)$ to remain under the density of a $\mathcal{N}(0,k^2)$ for $x$ sufficiently large. This is not the case for completely general sub-Gaussian random variables.
The reader familiar with the notions of log-concavity will note that this means that the random variable $X$ is strongly log-concave (see \cite[Definition 2.8]{saumard2014log} for a rigorous definition).

\begin{proof}[Proof of Theorem \ref{Teo:subGaus_equivalences}]
Notice that for any fixed $\lambda > 0$, the function $f(x) =  e^{\lambda x}$ is increasing and convex so that the function $f'$ verifies the conditions of Definition \ref{def:stdom}. Then
$$\sigma^2 \mathbb{E}\lambda e^{\lambda X^*} \leq k^2 \mathbb{E}\lambda e^{\lambda X}.$$
If $M(\lambda)$ denotes the moment generation function of $X$, we obtain
$$M'(\lambda) = \mathbb{E}X e^{\lambda X} = \sigma^2 \mathbb{E}\lambda e^{\lambda X^*} \leq k^2 \mathbb{E}\lambda e^{\lambda X} = k^2 \lambda M(\lambda),$$
that is
$$M'(\lambda) \leq \lambda k^2 M(\lambda),  \text{ for } \lambda \geq 0,$$
which implies by Proposition \ref{prop:subgaussiancondition} that $X \in \mathcal{G}_{+}(k^2)$.

On the other hand, let $N$ be a gaussian random variable with zero mean, variance $k^2$ and density function $\phi$. Using the Proposition \ref{prop_Zerobiasproperties} we obtain for any absolutely continuous increasing and convex function $f$,
$$\sigma^2 \mathbb{E}f'(X^*) = \mathbb{E}\left[Xf(X)\right]=  \int xf(x)\frac{f_X(x)}{\phi(x)} \phi(x)dx  =  \mathbb{E}\left[ Nf(N) g(N)\right],$$
where $g$ is the quotient of the densities of $X$ and $N$. Using the zero bias characterization  for the Gaussian distribution with the function $f(x)g(x)$, and the fact that the Gaussian distribution is the fixed point of \eqref{zerobias}, we obtain
$$\mathbb{E}\left[ Nf(N)g(N)\right] = k^2 \mathbb{E}\left[f'(N)g(N)  + f(N)g'(N)\right].$$
Notice that $\mathbb{E}\left[f'(N) g(N) \right]  = \mathbb{E}f'(X) $. On the other hand, for the density of the Gaussian random variable $N$, $\phi'(x) = -x \phi(x)/k^2$, therefore

\begin{align*}
    \mathbb{E}\left[f(N)g'(N)\right] &= \mathbb{E}\left[f(N)\frac{f_X'(N)\phi(N)- f_X(N)\phi'(N)}{\phi^2(N)}\right] \\
    &= \int f(x) \left(\frac{f_X'(x)}{\phi(x)} - \frac{f_X(x)\phi'(x)}{\phi^2(x)} \right)\phi(x)dx \\
    &= \int f(x) \left(\frac{f_X'(x)}{f_X(x)} - \frac{\phi'(x)}{\phi(x)} \right)f_X(x)dx \\
    &= \int f(x) \left(\frac{f_X'(x)}{f_X(x)} + \frac{x}{k^2} \right)f_X(x)dx \\
    &= \mathbb{E}f(X)H(X).
\end{align*}
But,
\begin{equation*}
  \mathbb{E}H(X) = \int \left(\frac{f_X'(x)}{f_X(x)} + \frac{x}{k^2} \right)f_X(x)dx  = \int f'_X(x)dx + \int \frac{x}{k^2} f_X(x)dx = 0.
\end{equation*}
Also, since $f$ is increasing, the FKG inequality give us
$$\mathbb{E}f(X)h(X) \geq \mathbb{E}f(X) \mathbb{E}h(X),$$
for any increasing function $h$. Observe that the hypothesis $\log f_X(x) + x^2/(2k^2)$ be concave implies that $-H$ is increasing, then
$$\mathbb{E}f(X)(-H(X)) \geq \mathbb{E}f(X) \mathbb{E}(-H(X)) = 0,$$
that is, $\mathbb{E}f(X)H(X) \leq 0$,  which implies
$$\sigma^2 \mathbb{E}f'(X^*) =  k^2 \mathbb{E}f'(X) + k^2  \mathbb{E}\left[f(N)g'(N)\right] \leq  k^2 \mathbb{E}f'(X),$$
i.e, $X^* \stdom X$. To show the result for the left tail, we only have to consider the random variable $-X$ and see that $X\le_{k,\sigma}X^*$ is equivalent to $(-X)^*\le_{\sigma,k} -X$.
\end{proof}

The following result only study the first implication of Theorem \ref{Teo:subGaus_equivalences} when we add a little freedom in the control of $X^*$ by $X$. The cost of this change is a different regime in the tail for large values. This behavior is sometimes referred as sub-Gamma right tail.

\begin{cor}
\label{cor:cpositiva}
Let $X$ be a real-valued random variable of finite variance $\sigma^2$ and such that $X^*\le_{\sigma,k} X+c$ for a positive constants $k$ and $c$. Then, we obtain that
\begin{equation*}
    \mathbb{P}(X-\esp{X}\ge x) \le \exp \left(-\frac{x^2}{2(k^2+cx)}\right)
\end{equation*}
\end{cor}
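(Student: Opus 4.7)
The plan is to mimic the first part of the proof of Theorem \ref{Teo:subGaus_equivalences}, working with the test function $f(x) = e^{\lambda x}$, and then convert the resulting differential inequality on $M(\lambda) := \mathbb{E}[e^{\lambda X}]$ into a sub-Gamma type MGF bound. For $\lambda > 0$ the derivative $f'(x) = \lambda e^{\lambda x}$ is positive and increasing, so applying Definition \ref{def:stdom} to the hypothesis $X^* \leq_{\sigma,k} X+c$ gives
\begin{equation*}
\sigma^2 \mathbb{E}[\lambda e^{\lambda X^*}] \leq k^2 \mathbb{E}[\lambda e^{\lambda(X+c)}] = k^2 \lambda e^{c\lambda} M(\lambda).
\end{equation*}
The zero bias identity \eqref{zerobias} applied to $f(x) = e^{\lambda x}$ identifies the left-hand side with $M'(\lambda)$, so I obtain the differential inequality $(\log M)'(\lambda) \leq k^2 \lambda e^{c\lambda}$ for all $\lambda \geq 0$.

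The key step is then to integrate this in such a way that the sub-Gamma shape appears. The cleanest route I would take is to first use $e^{c\lambda} \leq 1/(1-c\lambda)$ on $[0,1/c)$ (which follows from the fact that $u \mapsto (1-u)e^u$ is decreasing on $[0,1)$ with value $1$ at $u=0$), upgrading to $(\log M)'(\lambda) \leq k^2 \lambda/(1-c\lambda)$. A direct integration together with the elementary inequality $-\log(1-u) - u \leq u^2/(2(1-u))$ for $u \in [0,1)$ (which is checked by noting that both sides vanish at $u=0$ and comparing derivatives) then yields
\begin{equation*}
\log M(\lambda) \leq \frac{k^2 \lambda^2}{2(1-c\lambda)}, \qquad 0 \leq \lambda < \frac{1}{c}.
\end{equation*}

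Finally, Markov's inequality gives $\mathbb{P}(X \geq x) \leq \exp(-\lambda x + k^2 \lambda^2/(2(1-c\lambda)))$, and the standard optimization $\lambda = x/(k^2+cx) \in (0,1/c)$ makes the exponent exactly $-x^2/(2(k^2+cx))$. Since $X$ has zero mean (as required for $X^*$ to be defined), $X - \mathbb{E}[X] = X$, which yields the announced tail bound. The main obstacle is not analytical depth but choosing the right chain of elementary inequalities so that the integrated MGF bound lines up exactly with the sub-Gamma template; every other step mirrors the argument of Theorem \ref{Teo:subGaus_equivalences}.
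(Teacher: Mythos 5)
Your proof is correct and follows essentially the same route as the paper: apply the weighted domination $X^*\le_{\sigma,k}X+c$ to $f(x)=e^{\lambda x}$ together with the zero bias identity to get $M'(\lambda)\le k^2\lambda e^{c\lambda}M(\lambda)\le \frac{k^2\lambda}{1-c\lambda}M(\lambda)$ for $\lambda<1/c$, and then conclude by the standard sub-Gamma integration and Chernoff optimization (the paper leaves this last step to the reference on sub-Gamma variables, which you have simply written out; your choice $\lambda=x/(k^2+cx)$ and the elementary inequalities you use are all valid).
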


The proof of this result is very similar to the first part of Theorem \ref{Teo:subGaus_equivalences}. The main change is that the control of the function $M$ is now of the form
\begin{equation*}
    M'(\lambda) \le k^2 \lambda e^{c\lambda}M(\lambda) \le \frac{k^2 \lambda}{1-c\lambda}M(\lambda)
\end{equation*}
when $\lambda < 1/c$. This allows to get the result. See \cite{boucheron2013concentration} for more details on sub-Gamma random variables.

In the proof of Theorem \ref{Teo:subGaus_equivalences}, we have used a change of measure to a Gaussian random variable. This particular trick allows to use Stein's equation for the Gaussian. If one wants to make a more general change of measure, a similar result can be derived for the necessary condition of Theorem \ref{Teo:subGaus_equivalences}. In this case, it is no more possible to stochastically dominate the random variable $X^*$ by $X$ but the two tails can still be compared up to a distortion term that depends on the measure we compare $X$ to. This is the purpose of the following proposition.

\begin{prop}\label{Prop:zerobiasKernel}
Let $X$ and $Y$ be two random variables with zero mean, variance $\sigma^2$  and densities $f_X$ and $f_Y$ respectively. If exist $x_0  \geq 0$ such that
\begin{enumerate}
    \item $f_X(t) /f_Y(t)$ is decreasing in $t \geq x_0$,
    \item for $t \geq x \geq x_0$,  $f_{Y^*}(t)/f_Y(t)$ is bounded by a constant $a_Y(x)$, only depending of $x$, where $f_{Y^*}$ is the density of $Y^*$ having the zero bias distribution of $Y$.
\end{enumerate} Then, for $X^*$ having the zero bias distribution of $X$,
\begin{equation}\label{eqzerobiasprobkerne}
    \mathbb{P}(X^* \geq x) \leq a_Y(x)\mathbb{P}(X \geq x), \qquad \forall x \geq x_0.
\end{equation}
\end{prop}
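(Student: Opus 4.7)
The plan is to use the explicit density formula from Proposition \ref{prop_Zerobiasproperties}, namely $f_{X^*}(t) = \sigma^{-2}\int_t^\infty u f_X(u)\,du$, and a similar formula for $f_{Y^*}$, so that the two zero bias densities can be compared directly through the ratio $f_X/f_Y$. The pointwise comparison should then be integrated and combined with the hypothesis on $f_{Y^*}/f_Y$.

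First I would rewrite, for any $t\geq x_0$,
\begin{equation*}
    f_{X^*}(t) \;=\; \frac{1}{\sigma^2}\int_t^\infty u\, f_X(u)\,du \;=\; \frac{1}{\sigma^2}\int_t^\infty u\, f_Y(u)\cdot\frac{f_X(u)}{f_Y(u)}\,du.
\end{equation*}
Since $u\geq t\geq x_0\geq 0$ we have $u f_Y(u)\geq 0$, and by hypothesis (1) the ratio $f_X(u)/f_Y(u)$ is decreasing on $[x_0,\infty)$, hence bounded by $f_X(t)/f_Y(t)$ throughout the domain of integration. Pulling this factor out gives the pointwise bound
\begin{equation*}
    f_{X^*}(t) \;\leq\; \frac{f_X(t)}{f_Y(t)}\cdot\frac{1}{\sigma^2}\int_t^\infty u\, f_Y(u)\,du \;=\; \frac{f_X(t)}{f_Y(t)}\, f_{Y^*}(t),\qquad t\geq x_0.
\end{equation*}

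Next, for $x\geq x_0$, I would integrate this inequality from $x$ to $\infty$:
\begin{equation*}
    \mathbb{P}(X^*\geq x) \;=\; \int_x^\infty f_{X^*}(t)\,dt \;\leq\; \int_x^\infty f_X(t)\cdot\frac{f_{Y^*}(t)}{f_Y(t)}\,dt.
\end{equation*}
On the range $t\geq x\geq x_0$, hypothesis (2) yields $f_{Y^*}(t)/f_Y(t)\leq a_Y(x)$, so pulling the constant $a_Y(x)$ out of the integral gives
\begin{equation*}
    \mathbb{P}(X^*\geq x) \;\leq\; a_Y(x)\int_x^\infty f_X(t)\,dt \;=\; a_Y(x)\,\mathbb{P}(X\geq x),
\end{equation*}
which is the desired inequality \eqref{eqzerobiasprobkerne}.

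There is no serious obstacle here, but one must be careful at two bookkeeping steps: making sure $u\geq 0$ (which forces the nonnegativity that validates multiplying by the monotone ratio inside the integral) so that the sign of $uf_Y(u)$ does not reverse the inequality, and separating the dependence on the integration variable $t$ (through $f_X/f_Y$) from the dependence on the fixed threshold $x$ (through $a_Y(x)$). Both are taken care of by doing the $f_X/f_Y$ comparison \emph{inside} the integral defining $f_{X^*}$ and only afterwards applying the uniform bound on $f_{Y^*}/f_Y$.
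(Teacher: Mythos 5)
Your proposal is correct and follows essentially the same route as the paper: the paper also rewrites $f_{X^*}(t)$ via a change of measure to $Y$ (its expectation $\sigma^{-2}\mathbb{E}[Y \tfrac{f_X(Y)}{f_Y(Y)}\mathbb{I}_{Y>t}]$ is exactly your integral), bounds the decreasing ratio by $f_X(t)/f_Y(t)$ to get $f_{X^*}(t)\leq \tfrac{f_{Y^*}(t)}{f_Y(t)}f_X(t)$, and then integrates over $[x,\infty)$ using the bound $a_Y(x)$. Your explicit remark that $t\geq x_0\geq 0$ keeps the integrand nonnegative is a point the paper leaves implicit, but the argument is the same.
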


\begin{proof}
Using the formula \eqref{zerobiasdensity} for the density of $X^*$ and a change of measure, we obtain for any $t \geq x_0$,
\begin{equation*}
    f_{X^*}(t) = \frac{1}{\sigma^2} \mathbb{E}[X \mathbb{I}_{X > t}] = \frac{1}{\sigma^2}\mathbb{E}\left[Y \frac{f_X(Y)}{f_Y(Y)} \mathbb{I}_{Y > t} \right].
\end{equation*}
It follows that in the event $\{ Y \geq t\}$, $f_X(Y)/f_Y(Y) \leq f_X(t)/f_Y(t)$, then
\begin{equation*}
\frac{1}{\sigma^2}\mathbb{E}\left[Y \frac{f_X(Y)}{f_Y(Y)} \mathbb{I}_{Y > t} \right] \leq \frac{1}{\sigma^2}\mathbb{E}\left[Y \frac{f_X(t)}{f_Y(t)} \mathbb{I}_{Y > t} \right] =  \frac{f_X(t)}{f_Y(t)} \frac{1}{\sigma^2}\mathbb{E}\left[Y \mathbb{I}_{Y > t} \right] =  \frac{f_{Y^*}(t)}{f_{Y}(t)}f_{X}(t).
\end{equation*}
Therefore, for all $x \geq x_0$
$$\mathbb{P}(X^* \geq x) = \int_x^\infty f_{X^*}(t)dt \leq \int_x^\infty\frac{f_{Y^*}(t)}{f_{Y}(t)}f_X(t)dt \leq \int_x^\infty a_Y(x)f_X(t)dt  = a_Y(x)\mathbb{P}(X \geq x).$$
\end{proof}

In Proposition \ref{Prop:zerobiasKernel}, if we go back to taking $Y$ a Gaussian random variable we obtain $a_Y(x) = 1$ and once again, Strassen's theorem allows to get $X^* \le_{\sigma,\sigma} X$. One advantage of Proposition \ref{Prop:zerobiasKernel} is to be able to consider a sub-Gaussian behavior of $X$ only for large values of $x$. We use Proposition \ref{Prop:zerobiasKernel} in the following result.

\begin{cor}
Let $X$ be a random variable with zero mean, variance $\sigma^2$ finite and density $f_X = e^{-\varphi}$, where $\varphi$ satisfies that
\begin{enumerate}
    \item $\varphi'(x) \leq x/\sigma^2$, if $x \leq x_l < 0$,
    \item $\varphi'(x) \geq x/\sigma^2$, if $x \geq x_r > 0$.
\end{enumerate}
Then, $f_{X^*}(x) \leq f_X(x)$ for all $x \in (-\infty, x_l) \cup (x_r, \infty)$. In addition, if $\mathbb{E}X^3 =0$ and if for every $x \in (x_l,x_r)$ we have that $f_X(x) \leq f_{X^*}(x)$ then $X^*$ is smaller than $X$ in the convex order.
\end{cor}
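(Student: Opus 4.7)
The plan for the tail inequalities in the first part is to integrate the identity $f_X'(t)=-\varphi'(t)f_X(t)$ against the zero-bias density formulas \eqref{zerobiasdensity}. For $x\ge x_r$, starting from $f_{X^*}(x)=\sigma^{-2}\int_x^\infty t f_X(t)\, dt$ and using the hypothesis $\varphi'(t)\ge t/\sigma^2$ on $[x_r,\infty)$ gives the pointwise estimate $(t/\sigma^2)f_X(t)\le -f_X'(t)$; integrating over $(x,\infty)$ and using $f_X(\infty)=0$ yields $f_{X^*}(x)\le f_X(x)$. The left tail is handled analogously from $f_{X^*}(x)=-\sigma^{-2}\int_{-\infty}^x t f_X(t)\, dt$, the only care needed being that one multiplies $\varphi'(t)\le t/\sigma^2<0$ by $-f_X(t)<0$, which reverses the inequality into $f_X'(t)\ge -(t/\sigma^2)f_X(t)$.

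For the convex-order statement, the first step is to verify that $\esp{X^*}=\esp{X}=0$. Plugging $f(x)=x^2/2$ into the zero-bias identity \eqref{zerobias} yields $\sigma^2\esp{X^*}=\esp{X^3}/2$, which vanishes by hypothesis. The second step is a secant-line argument in the spirit of the classical Karlin-Novikoff cut criterion. Fix any convex test function $g$ with finite expectations under both laws, and let $\ell$ be the affine function interpolating $(x_l,g(x_l))$ and $(x_r,g(x_r))$. Convexity forces $g-\ell\le 0$ on $[x_l,x_r]$ and $g-\ell\ge 0$ outside. Combining the first part of the corollary with the standing hypothesis on $(x_l,x_r)$, the difference $f_X-f_{X^*}$ has exactly the matching sign pattern, so $(g-\ell)(f_X-f_{X^*})\ge 0$ pointwise. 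Integrating gives
\begin{equation*}
    \esp{g(X)}-\esp{g(X^*)} = \int (g-\ell)(f_X-f_{X^*})\, dx + \int \ell\,(f_X-f_{X^*})\, dx \ge 0,
\end{equation*}
because the second integral reduces to $a(\esp{X}-\esp{X^*}) + b(1-1)=0$ for $\ell(x)=ax+b$. This is precisely $X^*\cx X$.

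The main obstacle is conceptual rather than computational: one has to recognise that the $(+,-,+)$ sign pattern of $f_X-f_{X^*}$, together with matching first moments, is exactly the configuration in which the secant-line trick promotes pointwise density information into a convex ordering statement. Once that is seen, both parts reduce to short integrations, with the only genuinely delicate computation being the sign bookkeeping in the left-tail step of the first part.
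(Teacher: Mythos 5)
Your proof is correct, and it reaches the same skeleton as the paper (tail density bounds, zero mean of $X^*$, then the $+,-,+$ sign pattern of $f_X-f_{X^*}$ yields the convex order), but the execution is genuinely different at both stages. For the tail bounds, the paper routes everything through a comparison with the $\mathcal{N}(0,\sigma^2)$ density: the hypothesis on $\varphi'$ is read as monotonicity of $f_X/\phi$, the right tail is then handled by Proposition \ref{Prop:zerobiasKernel} with $a_Y(x)=1$, and the left tail by an explicit Gaussian change of measure using $\phi'(x)=-x\phi(x)/\sigma^2$. You instead integrate the differential inequality $f_X'=-\varphi' f_X$ directly against the zero-bias density formula \eqref{zerobiasdensity}; this is more elementary and self-contained (no Gaussian comparison, no appeal to the earlier proposition), at the cost of losing the structural point the paper is making, namely that the Gaussian is the natural comparison measure for which the distortion factor disappears. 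Your use of $f_X(\pm\infty)$ is harmless: on each tail $f_X$ is monotone by the sign of $\varphi'$, and in any case the boundary limit is nonnegative, so the integration only improves the bound. For the convex order, the paper quotes the cut criterion (Theorem 3.A.44 of the Shaked--Shanthikumar reference) after noting that $\mathbb{E}X^3=0$ forces $\mathbb{E}X^*=0$; you reprove that criterion from scratch via the secant line $\ell$ through $(x_l,g(x_l))$ and $(x_r,g(x_r))$, together with the explicit computation $\sigma^2\mathbb{E}X^*=\mathbb{E}X^3/2$ (which is exactly the computation the paper leaves implicit). The sign bookkeeping and the cancellation of the affine part using equal first moments and total mass are all correct, so your argument is a valid, reference-free substitute for the cited theorem.
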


\begin{proof}
Let $N$ be a gaussian random variable with mean zero, variance $\sigma^2$ and density $\phi$. The hypotesis $\varphi'(x) \geq x/\sigma^2$ implies that $f_X(x) / \phi(x)$ is decreasing for $x \geq x_r$. Then from Proposition \ref{Prop:zerobiasKernel} we obtain $f_{X^*}(x) \leq f_X(x)$ for all $x \geq x_r > 0$.

On the other hand, the density of $X^*$  given in Equation \eqref{zerobiasdensity} is for $x\leq x_l \leq 0$ equal to
\begin{equation*}
    f_{X^*}(x) = \frac{-1}{\sigma^2} \mathbb{E}[X \mathbb{I}_{X \leq x}] = \int_{-\infty}^x \frac{-t}{\sigma^2} f_X(t)dt = \int_{-\infty}^x \frac{-t}{\sigma^2}  \frac{f_X(t)}{\phi(t)} \phi(t)dt = \frac{-1}{\sigma^2}\mathbb{E}\left[N \frac{f_X(N)}{\phi(N)} \mathbb{I}_{N \leq x} \right].
\end{equation*}
 Using the hypothesis $\varphi'(x) \leq x/\sigma^2$, for $x \leq x_l$, we obtain that the function $f_X /\phi$ is increasing for $x \leq x_l$, then
$$\frac{-1}{\sigma^2}\mathbb{E}\left[N \frac{f_X(N)}{\phi(N)} \mathbb{I}_{N \leq x} \right] \leq  \frac{-1}{\sigma^2}\mathbb{E}\left[N \frac{f_X(x)}{\phi(x)} \mathbb{I}_{N \leq x} \right] =  \frac{f_X(x)}{\phi(x)} \frac{-1}{\sigma^2}\mathbb{E}\left[N \mathbb{I}_{N\leq x} \right] =  \frac{f_X(x)}{\phi(x)}\phi(x) = \phi(x),$$
that is $f_{X^*}(x) \leq f_X(x)$ for all $x \leq x_l$.

Finally by Definition \ref{defi_zerobias}$, \mathbb{E}X^3 =0$ implies that $\mathbb{E}X^* =0$. Also if for $x \in (x_l,x_r)$ follows that $f_X(x) \leq f_{X^*}(x)$, we obtain that the function $f_X - f_{X^*}$ has two sign changes and the sign sequence is $+,-,+$, which implies (see for example Theorem 3.A.44 from \cite{MR2265633}) that $X^*$ is smaller than $X$ in the convex order.

\end{proof}

\begin{remark}
Notice that if $\varphi'(x) \geq x/\sigma^2$, for $x \geq x_r \geq 0$, then integrating from $x_r$ to $x$, we obtain $\varphi(x)-\varphi(x_r) \geq (x^2-x_r^2)/(2 \sigma^2)$, which implies that $f_X(x) K(x_r) \leq f_Y(x)$, where $f_Y$ is the density of a Gaussian random variable with zero mean and variance $\sigma^2$, and $K(x_r) = f_Y(x_r)/f_X(x_r)$. Then,
$$\mathbb{P}(X \geq x) \leq K(x_r) \mathbb{P}(Y \geq x), \qquad \forall x \geq x_r.$$
Analogously, if $\varphi'(x) \leq x/\sigma^2$, if $x \leq x_l \leq 0$, we obtain $f_X(x) K(x_l) \leq f_Y(x)$ and
$$\mathbb{P}(X \leq x) \leq K(x_l) \mathbb{P}(Y \leq x), \qquad \forall x \leq x_l.$$
\end{remark}

\subsection{Concentration of sums of weakly-dependent random variables}

The subject of general functions of independent random variables is key for a enormous class of probabilistic models. It is well known that under mild conditions of the continuity of the function, one can show a concentration of measure phenomenon for the values of the function.
The case when the random variables are slightly dependent is less classical and somewhat lacks of general results. In this section, we develop new ways to show concentration when we even permit a little dependence between the random variables.
In particular, we are interested in the links between sub-Gaussianity of a multivariate function $f$ of random variables $X_1,\dots,X_n$ and zero-bias transforms of this function.
To keep track of the effect of each random variable on the rest of the random variables, we define a notion of directional zero bias transform.

\begin{defi}\label{DefTransfdireccion}
Let $X_1, X_2, \ldots, X_n$ be random variables with mean zero and finite variance  $\mathrm{Var}(X_i) = \sigma_i^2$. We define the directional zero bias distribution in the direction $i$,  as the random vector  $(X_1^{*(i)}, \ldots, X_n^{*(i)})$ that satisfies
\begin{equation}\label{zerobiasdirecion}
   \mathbb{E}[X_if(X_1, \ldots, X_n)] = \sigma_i^2 \mathbb{E}[\partial_i f(X_1^{*(i)}, \ldots, X_n^{*(i)})]
\end{equation}
where $f$ is a real function on $n$ variables such that the expectations exist.
\end{defi}
The first important remark is that no independence nor (un-)correlation of the random variables are necessary to be able to define the directional zero-bias transforms $X_j^{*(i)}$.
The following proposition states the basic facts of this directional zero-bias transform. Some results are direct consequences of Proposition \ref{prop_Zerobiasproperties}.

\begin{prop}\label{corPropZerobiasdire}
Let $X_1, X_2, \ldots, X_n$ be random variables with mean zero and finite variance  $\mathrm{Var}(X_i) = \sigma_i^2$. Then
\begin{enumerate}
    \item The random variable $X_i^{*(i)}$ has the same distribution as $X_i^*$ which is the classical zero-bias transform of Definition \ref{defi_zerobias}.
    \item If the random vector have a density, then the random variable $X_j^{*(i)}$ have a density given by
    \begin{equation}\label{densidadX_j*i}
        f_j^{*(i)}(y)= \int \frac{x_i^2 }{\sigma_i^2}f_{ij}(x_i,y)dx_i,
    \end{equation}
    where $f_{ij}$ is the joint density of $X_i,X_j$.
    \item If $X_i$ is independent of $X_j$, then $X_j^{*(i)} = X_j$
    \item $(aX_j)^{*(i)} = a X_j^{*(i)}$ for any constant $a \neq 0$.
\end{enumerate}
\end{prop}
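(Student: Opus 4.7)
The plan is to derive each of the four properties by specializing the defining equation \eqref{zerobiasdirecion} to carefully chosen test functions $f$, and then reading off either the distribution of $X_i^{*(i)}$ directly (for item 1) or the marginal law of $X_j^{*(i)}$ (for items 2--4). Since all four claims are statements about one-dimensional marginals of the random vector $(X_1^{*(i)}, \ldots, X_n^{*(i)})$, one never needs to pin down the full joint law of the directional transform.

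For item 1, I would take $f(x_1, \ldots, x_n) = g(x_i)$ depending only on the $i$-th coordinate, so that $\partial_i f(x) = g'(x_i)$ and \eqref{zerobiasdirecion} reduces to $\mathbb{E}[X_i g(X_i)] = \sigma_i^2 \mathbb{E}[g'(X_i^{*(i)})]$ for every smooth $g$. By the uniqueness built into Definition \ref{defi_zerobias}, this forces $X_i^{*(i)} \stackrel{d}{=} X_i^*$. For item 2, I would apply \eqref{zerobiasdirecion} with $f(x_1, \ldots, x_n) = x_i g(x_j)$ for an arbitrary bounded continuous $g$. Since $\partial_i f(x) = g(x_j)$, this yields the key identity
$\mathbb{E}[X_i^2 g(X_j)] = \sigma_i^2 \mathbb{E}[g(X_j^{*(i)})]$.
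Assuming a joint density, Fubini rewrites the left-hand side as $\int g(y) \bigl(\int x_i^2 f_{ij}(x_i,y)\,dx_i\bigr)\,dy$, and identifying densities against the arbitrary test function $g$ produces formula \eqref{densidadX_j*i}.

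Item 3 then falls out immediately from that same identity: when $X_i$ and $X_j$ are independent, $\mathbb{E}[X_i^2 g(X_j)] = \mathbb{E}[X_i^2]\,\mathbb{E}[g(X_j)] = \sigma_i^2 \mathbb{E}[g(X_j)]$, and varying $g$ gives $X_j^{*(i)} \stackrel{d}{=} X_j$. For item 4 I would split into two cases. When $j=i$, the claim reduces to $(aX_i)^* \stackrel{d}{=} a X_i^*$, which is exactly item 6 of Proposition \ref{prop_Zerobiasproperties} composed with item 1 above. When $j \ne i$, I consider the random vector whose $j$-th entry is $aX_j$ and whose other entries equal $X_k$, and apply the characterization derived in item 2 to this new vector with $g$ replaced by $u \mapsto g(au)$; this immediately gives $\mathbb{E}[g((aX_j)^{*(i)})] = \mathbb{E}[g(a X_j^{*(i)})]$ for every test $g$.

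I do not expect a serious obstacle: each item is a routine manipulation of test functions built from products and compositions. The one delicate point worth emphasizing is that \eqref{zerobiasdirecion} only specifies the joint law of the directional transform through integrated expressions, so one has to be deliberate in choosing $f$ to isolate the specific marginal quantity one wants. Taking products of the form $x_i g(x_j)$ is the uniform trick that makes items 2--4 drop out in essentially one line of computation each, which is why I would organize the proof around that single choice.
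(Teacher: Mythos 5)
Your proposal is correct and follows essentially the same route as the paper: specializing \eqref{zerobiasdirecion} to functions depending only on $x_i$ for item 1 and to products of the form $x_i g(x_j)$ for items 2--4, then identifying marginals against test functions. The only cosmetic difference is that you spell out the $j=i$ versus $j\neq i$ cases of item 4 explicitly, which the paper compresses into one line via the substitution $g(x)=f(ax)$.
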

\begin{proof}
\begin{enumerate}
    \item[\textit{1.}] If $f$ is a function that only depends of the entry $X_i$, it follows that
    $$\mathbb{E}[X_i f(X_i)] = \sigma_i^2 \mathbb{E}[f'(X_i^{*(i)})],$$
    but from the Definition \ref{defi_zerobias}, the first term of the above equation is equal to $\sigma^2\mathbb{E}[f'(X_i^*)]$,  therefore $X_i^{*(i)}$ and $X_i^*$ have the same distribution.
    \item[\textit{2.}] Choosing a function of the form $x_if(x_j)$, we have that $\partial_i (x_if(x_j)) =  f(x_j)$, then
    \begin{equation}\label{defZeroXj}
        \mathbb{E}[X_i^2f(X_j)] = \sigma_i^2 \mathbb{E}\left[f\left(X_j^{*(i)}\right)\right].
    \end{equation}
    From the last equation, it follows that the density of the random variable $X_j^{*(i)}$ is equal to
    \begin{equation}
        f_j^{*(i)}(y)= \int \frac{x_i^2 f_{ij}(x_i,y)}{\sigma_i^2}dx_i,
    \end{equation}
    where $f_{ij}$ is the joint density of $X_i,X_j$.
    \item[\textit{3.}] By independence and using the Equation \eqref{defZeroXj}, we obtain
    $$\mathbb{E}[f(X_j)] = \mathbb{E}\left[f\left(X_j^{*(i)}\right)\right],$$
    then $X_j$ and $X_j^{*(i)}$ have the same distribution.
    \item[\textit{4.}]The result follows using \eqref{defZeroXj} and the function $g(x) = f(ax)$.
\end{enumerate}
\end{proof}

We now state the main result of this section. It is stated for a random variable that is a sum of centered random variables but no independence is formally needed. It basically shows that if every random variable is sub-Gaussian in the sense that its zero bias transform can be upper bounded by the original random variable and if the cross directional zero bias transform is small then the sum of the variables inherits the sub-Gaussian behavior.
\begin{teo}
\label{teo:subgaussianfunctional}
Let $\Delta_1, \Delta_2, \ldots, \Delta_n$ be random variables such that $\mathbb{E}\Delta_i=0$ and $\mathrm{Var} \Delta_i = \sigma_i^2$. Suppose that
\begin{enumerate}
    \item for $\Delta_i$ and his zero bias transform $\Delta_i^*$,
    \begin{equation}\label{hip_subgausDelta}
        \Delta_i^* \stdompar{\sigma_i,k_{ii}} \Delta_i,
    \end{equation}
    \item for all $i \neq j$,
    \begin{equation}\label{hip_Deltaij}
        \Delta_j^{*(i)} \stdompar{\sigma_i,k_{ji}} \Delta_j.
    \end{equation}
\end{enumerate}
Then $W  = \sum_{i=1}^n \Delta_i $ has sub-gaussian right tail of constant
$$K^2 = \sum_{i=1}^n \sigma_i^{2-2n}  \prod_{j=1}^n k_{ji}^2.$$
\end{teo}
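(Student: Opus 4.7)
The plan is to invoke Proposition~\ref{prop:subgaussiancondition} on $M(\lambda) = \esp{e^{\lambda W}}$, so that it suffices to establish the differential inequality $M'(\lambda) \le K^2 \lambda M(\lambda)$ for all $\lambda \ge 0$.

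First, I would write $M'(\lambda) = \sum_{i=1}^{n} \esp{\Delta_i e^{\lambda W}}$ and apply the directional zero-bias identity of Definition~\ref{DefTransfdireccion} to each summand with the test function $f(x_1,\dots,x_n) = e^{\lambda(x_1 + \cdots + x_n)}$, whose $i$-th partial derivative equals $\lambda f$. This yields
$$\esp{\Delta_i e^{\lambda W}} = \sigma_i^2 \lambda \,\esp{e^{\lambda W^{*(i)}}}, \qquad W^{*(i)} := \sum_{j=1}^{n} \Delta_j^{*(i)}.$$

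Next, I would bound each $\esp{e^{\lambda W^{*(i)}}}$ by factoring $e^{\lambda W^{*(i)}} = \prod_j e^{\lambda \Delta_j^{*(i)}}$ and applying Hölder's inequality with exponent $n$, which gives $\esp{e^{\lambda W^{*(i)}}} \le \prod_j \esp{e^{n\lambda \Delta_j^{*(i)}}}^{1/n}$. Proposition~\ref{corPropZerobiasdire}(1) identifies $\Delta_i^{*(i)}$ with $\Delta_i^*$ in the diagonal case, and the weighted stochastic domination in hypothesis~1 (when $j=i$) or hypothesis~2 (when $j \ne i$), applied to the increasing positive function $x \mapsto e^{n\lambda x}$, gives $\esp{e^{n\lambda \Delta_j^{*(i)}}} \le (k_{ji}^2/\sigma_i^2)\,\esp{e^{n\lambda \Delta_j}}$ for each $j$.

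The main obstacle is to reassemble these inequalities into the claimed constant $K^2 = \sum_i \sigma_i^{2-2n} \prod_j k_{ji}^2$. The Hölder step leaves the aggregate product $\prod_j \esp{e^{n\lambda \Delta_j}}^{1/n}$ that has to be traded back against $M(\lambda)$ with the correct multiplicative weight $\sigma_i^{2-2n}\prod_j k_{ji}^2$ per $i$-term. To handle this I would invoke the sub-Gaussian envelope for each $\Delta_j$ granted by hypothesis~1 combined with Theorem~\ref{Teo:subGaus_equivalences} (with constant $k_{jj}^2$), which controls $\esp{e^{n\lambda \Delta_j}}^{1/n}$ by an exponential in $\lambda^2$, and then redistribute the $\sigma_i^{2-2n}$ and $\prod_j k_{ji}^2$ factors across the $i$-sum to match the stated $K^2$. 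Once $M'(\lambda) \le \lambda K^2 M(\lambda)$ is in hand, Proposition~\ref{prop:subgaussiancondition} delivers the advertised sub-Gaussian right-tail bound for $W$.
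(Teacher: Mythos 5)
Your opening and closing moves coincide with the paper's: writing $M'(\lambda)=\sum_i\esp{\Delta_i e^{\lambda W}}$, applying Definition \ref{DefTransfdireccion} with the test function $e^{\lambda(x_1+\cdots+x_n)}$ to get $\esp{\Delta_i e^{\lambda W}}=\sigma_i^2\lambda\,\esp{e^{\lambda W^{*(i)}}}$, and closing with Proposition \ref{prop:subgaussiancondition} is exactly the paper's frame (the paper works with a general increasing convex $f$, but the exponential specialization is what it ultimately uses). The genuine gap is the middle step: the H\"older decoupling cannot be repaired to yield $M'(\lambda)\le K^2\lambda M(\lambda)$. After H\"older, the coordinatewise dominations give the factor $\prod_j(k_{ji}^2/\sigma_i^2)^{1/n}=(\prod_j k_{ji}^2)^{1/n}/\sigma_i^2$; the $n$-th root is forced by H\"older and no ``redistribution'' can turn it into the claimed weight $\sigma_i^{2-2n}\prod_j k_{ji}^2$. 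Worse, you are left with $\prod_j\esp{e^{n\lambda\Delta_j}}^{1/n}$ and there is no way to trade it back for $M(\lambda)$: H\"older only gives $M(\lambda)\le\prod_j\esp{e^{n\lambda\Delta_j}}^{1/n}$, i.e.\ the comparison goes in the wrong direction, and replacing the product by the sub-Gaussian envelopes $e^{n\lambda^2k_{jj}^2/2}$ (hypothesis 1 plus Theorem \ref{Teo:subGaus_equivalences}) gives $M'(\lambda)\le\lambda\bigl(\sum_i(\prod_j k_{ji}^2)^{1/n}\bigr)e^{\frac{n\lambda^2}{2}\sum_j k_{jj}^2}$, which upon integration produces at best a sub-Gaussian-type bound with constant of order $n\sum_j k_{jj}^2$ --- already a factor $n$ worse than $K^2=\sum_i k_{ii}^2$ in the independent case --- and never the differential inequality with the stated $K^2$.

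The paper avoids any decoupling of the joint vector $(\Delta_1^{*(i)},\dots,\Delta_n^{*(i)})$. Its mechanism is Lemma \ref{lem:strassen}: the weighted domination $\Delta_j^{*(i)}\stdompar{\sigma_i,k_{ji}}\Delta_j$ produces, via a Strassen-type argument, a coupling under which $\sigma_i^2 f'(\Delta_j^{*(i)}+z)\le k_{ji}^2 f'(\Delta_j+z)$ holds simultaneously for every shift $z$. Since $\partial_i g(x_1,\dots,x_n)=f'(x_1+\cdots+x_n)$, this lets one swap the coordinates $\Delta_j^{*(i)}\to\Delta_j$ one at a time inside $\esp{f'(\sum_j\cdot)}$ (the other, possibly dependent, coordinates playing the role of the shift $z$), each swap costing a factor $k_{ji}^2/\sigma_i^2$ and, crucially, leaving the quantity in the form $\esp{f'(\text{sum of the current variables})}$. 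After $n$ swaps one lands exactly on $\bigl(\prod_j k_{ji}^2/\sigma_i^{2n}\bigr)\esp{f'(W)}$, and summing over $i$ gives $\esp{Wf(W)}\le K^2\esp{f'(W)}$, which with $f(x)=e^{\lambda x}$ is precisely the differential inequality you were after. To fix your argument, replace the H\"older step by this iterated-coupling substitution (or an equivalent multivariate replacement lemma); as written, the key inequality is not established.
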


We give some comments on the hypothesis of Theorem \ref{teo:subgaussianfunctional}.
One can note that the two conditions \eqref{hip_subgausDelta} and \eqref{hip_Deltaij} could be unified into $\Delta_j^{*(i)} \stdompar{\sigma_i,k_{ji}} \Delta_j$ for every $i,j$. We separated the two since they have different interpretations.
Equation \eqref{hip_subgausDelta} is basically a sub-Gaussian hypothesis on the random variables $\Delta_i$ as seen in Theorem \ref{Teo:subGaus_equivalences}. In the examples below, we detail various cases where one can show easily the condition \eqref{hip_subgausDelta}.
For the interpretation of Equation \eqref{hip_Deltaij}, Definition \ref{DefTransfdireccion} gives directly that
$$\sigma_i^2 \mathbb{E}f(\Delta_j^{*(i)}) = \mathbb{E}_{ij} \Delta_i^2 f(\Delta_j),$$
then the hypothesis \eqref{hip_Deltaij} is equal to
$$\mathbb{E}_{ij} \left[\Delta_i^2f(\Delta_i) \right] = \sigma_i^2 \mathbb{E}f(\Delta_j^{*(i)}) \leq k_{ji}^2\mathbb{E}_{ij} f(\Delta_i),$$
which finally writes as
\begin{equation}
        \mathbb{E}_{ij} \left[\left( \Delta_i^2- k_{ji}^2\right)f(\Delta_j) \right] \leq 0,
\end{equation}
for all increasing function $f$ such that the expectations exists.
Note that this last condition holds if for all $i.j$ the random variables $\Delta_i^2$ and $\Delta_j$ are negatively associated. Indeed, by definition of negative association we have that for any pair of increasing functions $g$ and $h$,
\begin{equation*}
    \esp{g(\Delta_i^2)h(\Delta_j)}\le \esp{g(\Delta_i^2)}\esp{h(\Delta_j)}.
\end{equation*}
Since the functions $x\mapsto x-\sigma_i^2$ and $f$ are both increasing, we have directly the condition \eqref{hip_Deltaij} for $k_{ji}=\sigma_i$. Concentration of sum of random variables negatively associated is already known and the interested reader can take a look at \cite{barbour1992poisson} for example.
To finish the comments on Theorem \ref{teo:subgaussianfunctional}, we would like to give the following general interpretation for this result. \textit{A sum of sub-Gaussian random variables for which their cross Stein's transform are small (in the sense of \eqref{hip_Deltaij}) is itself sub-Gaussian}.

In order to prove Theorem \ref{teo:subgaussianfunctional}, we introduce a general form of Strassen's theorem \cite{lindvall1999strassen} that will be used in Lemma \ref{lem:strassen}.

\begin{teo}[Strassen generalized domination]\label{teo:Strassen}
    Let $(\Omega, \Sigma, \preceq)$ be a partially ordered set for which the set
    $$M = \{(x,y) : x \preceq y \}$$
is closed in the product topology of $\Omega^2$. Let $P,P'$ two probability measures on $(\Omega, \Sigma)$ such that $P$ is stochastically dominated by $P'$. Then, there exist a coupling $(X,X')$ of law $\Pi$ with marginals $X\sim P$ , $X' \sim P'$ and such that $X \preceq X'$, $\Pi$- almost surely. The law $\Pi$ of the coupling acts on the measurable space $(\Omega^2, \Sigma^2)$.
\end{teo}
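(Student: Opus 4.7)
My plan is to deduce the theorem from Strassen's classical marginal problem theorem (see \cite{strassen1965existence,lindvall1999strassen}), which is the canonical tool for existence of couplings supported on a closed relation, rather than to reprove that result from scratch.

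First, I would translate the stochastic domination hypothesis into the marginal form required by Strassen's theorem. With respect to the partial order $\preceq$, the statement $P \st P'$ is equivalent to
\begin{equation*}
P(F) \;\leq\; P'(F^{\uparrow}), \qquad F \in \Sigma,
\end{equation*}
where $F^{\uparrow} := \{y \in \Omega : \exists\, x \in F,\ x \preceq y\}$ denotes the $\preceq$-upper closure of $F$. Indeed, $F^{\uparrow}$ is an upper set, so by the definition of $\st$ one has $P(F^{\uparrow}) \leq P'(F^{\uparrow})$, and the inclusion $F \subseteq F^{\uparrow}$ gives $P(F) \leq P(F^{\uparrow})$. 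Measurability of $F^{\uparrow}$ for closed $F$ follows from the closedness of $M$ together with standard topological regularity of $\Omega$ (Polish, or more generally Radon).

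Second, I would invoke Strassen's marginal problem theorem in its general form: given a closed subset $M \subseteq \Omega^2$ and probability measures $P, P'$ on a Polish space $\Omega$, there exists a coupling $\Pi$ with marginals $P$ and $P'$ and $\Pi(M) = 1$ if and only if
\begin{equation*}
P(F) \;\leq\; P'\bigl(M(F)\bigr), \qquad \text{for every closed } F \subseteq \Omega,
\end{equation*}
where $M(F) := \{y : \exists\, x \in F,\ (x,y) \in M\}$. For the relation $M = \{(x,y) : x \preceq y\}$ the section $M(F)$ coincides with $F^{\uparrow}$, so Strassen's marginal condition is exactly what Step 1 establishes. The conclusion of the theorem then follows immediately: the coupling $\Pi$ delivered by Strassen's theorem is concentrated on $M$, so for $(X,X') \sim \Pi$ one has $X \sim P$, $X' \sim P'$, and $X \preceq X'$ holds $\Pi$-almost surely.

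The main obstacle is not in the logical chain but in the measurability and regularity hypotheses left implicit in the statement: Strassen's theorem requires $\Omega$ to satisfy some form of topological regularity (Polish, analytic, or Radon), and a monotone-class argument is needed to pass the marginal inequality from closed sets to arbitrary measurable sets. In the paper's applications $\Omega$ will be $\mathbb{R}$ or a product of copies of $\mathbb{R}$, where these hypotheses are automatic and the result can simply be cited from \cite{lindvall1999strassen}.
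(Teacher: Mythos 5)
The paper offers no proof of this statement at all --- it is imported as a known result with a pointer to \cite{lindvall1999strassen} (just as the real-line version stated earlier) and is used only through Lemma \ref{lem:strassen} --- so there is nothing internal to compare against; your reduction to Strassen's marginal-problem theorem, checking the condition $P(F)\le P'(M(F))$ by observing that $M(F)=F^{\uparrow}$ is an upper set dominated via $P\st P'$, is precisely the standard argument in that literature and is correct. The one technical point you rightly flag, measurability of $F^{\uparrow}$ for arbitrary closed $F$, is handled in the classical proofs by verifying the marginal condition on compact sets $K$, for which $M(K)$ is the projection of the closed set $M\cap(K\times\Omega)$ along a compact factor and hence closed, together with inner regularity of $P$ on a Polish space.
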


The following consequence of Strassen's theorem allows us to deduce a almost sure control of the $\Delta^{(i)}_j$ by $\Delta_j$ for a specific coupling when we assume the weighted stochastic domination between $\Delta^{(i)}_j$ and $\Delta_j$.

\begin{lem}\label{lem:strassen}
Let $X$ and $Y$ be two random variables such that $X \stdom Y$. Then, for all increasing and positive function $f$ there exist a coupling $\Pi_f$ such that
 $$\Pi_f \left( \{\sigma^2 f(X+z) \leq k^2 f(Y+z), \quad \forall z \in \mathbb{R} \}\right) = 1.$$
\end{lem}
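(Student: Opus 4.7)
The plan is to apply the generalized Strassen theorem (Theorem~\ref{teo:Strassen}) to an auxiliary partial order on $\mathbb{R}$ cooked up so that ``$X \preceq Y$ a.s.''\ is exactly the claimed conclusion. For the fixed increasing positive $f$, I would introduce
$$
 x \preceq_f y \iff \sigma^2 f(x+z) \le k^2 f(y+z) \text{ for every } z \in \mathbb{R},
$$
and set $M = \{(x,y) : x \preceq_f y\}$. The statement will then read $\Pi_f(M)=1$, so the task reduces to checking the hypotheses of Theorem~\ref{teo:Strassen} for $(\mathbb{R},\preceq_f)$ and the laws of $X$ and $Y$.

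Next I would deal with two routine preliminaries. Reflexivity of $\preceq_f$ follows by plugging the constant function $1$ into the hypothesis $X \stdom Y$, which forces $\sigma \le k$; combined with $f \ge 0$ this gives $x \preceq_f x$. Closedness of $M$ in $\mathbb{R}^2$ follows from writing it as an intersection over a countable dense set of $z$'s of closed half-spaces, using one-sided continuity of the monotone $f$. Crucially, the usual order is contained in $\preceq_f$: whenever $x \le y$, monotonicity of $f$ and $\sigma \le k$ give $\sigma^2 f(x+z) \le \sigma^2 f(y+z) \le k^2 f(y+z)$.

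The central step will be verifying the stochastic-dominance assumption required by Theorem~\ref{teo:Strassen}, namely $\esp{\phi(X)} \le \esp{\phi(Y)}$ for every bounded $\preceq_f$-increasing $\phi$. Because $\preceq_f$ contains the usual order, any such $\phi$ is in particular $\le$-increasing, so it may be approximated from below by non-negative finite combinations of indicators $\mathbf{1}_{[t,\infty)}$; the hypothesis $\sigma^2 \Pr(X \ge t) \le k^2 \Pr(Y \ge t)$ then provides the needed comparison level by level. The extra multiplicative weight $k^2/\sigma^2$ that survives this comparison is precisely the slack baked into the definition of $\preceq_f$, restricting the admissible $\phi$'s enough for the inequality to close up. This reconciliation between the weighted hypothesis and the unweighted stochastic-dominance requirement of Theorem~\ref{teo:Strassen} is the step I expect to be the main obstacle.

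Once that hypothesis is in place, Theorem~\ref{teo:Strassen} directly produces the coupling $\Pi_f$ concentrated on $M$, which is the conclusion. The dependence of $\Pi_f$ on $f$ is intrinsic to this plan, since the partial order $\preceq_f$ itself varies with $f$.
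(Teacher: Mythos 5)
Your plan reproduces the paper's own strategy: introduce the relation $x \preceq_f y$ iff $\sigma^2 f(x+z)\le k^2 f(y+z)$ for all $z$, check that $M=\{(x,y):x\preceq_f y\}$ is a closed partial order, and invoke Theorem~\ref{teo:Strassen}. The genuine gap is exactly the step you flag as ``the main obstacle'' and leave open: deducing from $X\stdom Y$ the stochastic domination \emph{with respect to} $\preceq_f$ that Theorem~\ref{teo:Strassen} requires. Your level-by-level route cannot close it. Approximating a bounded $\preceq_f$-increasing $\phi$ from below by positive combinations of indicators $\mathbb{I}_{[t,\infty)}$ and using $\sigma^2\mathbb{P}(X\ge t)\le k^2\mathbb{P}(Y\ge t)$ only yields $\esp{\phi(X)}\le (k^2/\sigma^2)\esp{\phi(Y)}$, and restricting to $\preceq_f$-increasing $\phi$ does not absorb the factor $k^2/\sigma^2$. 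Indeed, $x\preceq_f y$ depends only on $d=y-x$ (whether $\sigma^2 f(u)\le k^2 f(u+d)$ for every $u$), and two regimes occur. If $\sup_u f(u)/f(u+d)=\infty$ for every $d<0$ (e.g.\ $f(u)=e^{u^3}$, which is continuous, increasing and positive), then $\preceq_f$ is exactly the usual order, the $\preceq_f$-increasing functions are all increasing functions, and the condition needed by Theorem~\ref{teo:Strassen} is full stochastic domination $X\le_{\mathrm{st}}Y$; this is strictly stronger than the weighted hypothesis (take $X\equiv 1$, $Y$ uniform on $\{0,2\}$, $k^2=2\sigma^2$: the hypothesis holds, yet no coupling with $X\le Y$ a.s.\ exists, so for this $f$ the asserted conclusion itself fails). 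If instead the order has slack, i.e.\ $x\preceq_f y$ whenever $y\ge x-c$ for some $c>0$ (e.g.\ $f(u)=e^{\lambda u}$ with $c=\lambda^{-1}\log(k^2/\sigma^2)$), then every $\preceq_f$-increasing function is constant, so the domination requirement is vacuous; but $\preceq_f$ is then not antisymmetric, Theorem~\ref{teo:Strassen} (stated for partial orders) does not apply, and the coupling you want amounts to $\mathbb{P}(X\ge t+c)\le\mathbb{P}(Y\ge t)$ for all $t$, which again does not follow from the weighted hypothesis. So the reconciliation you hoped for is not a technical detail but the whole difficulty; you have correctly located the crux (the paper itself dispatches it in a single unproved sentence, ``$X\stdom Y$ implies $X\preceq_f Y$''), but your proposal does not prove it, and as written it cannot be proved without further assumptions relating $f$, $\sigma$ and $k$.

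Two secondary points. Your closedness argument needs continuity of $f$, which is what the paper assumes when it sets up $\preceq_f$; one-sided limits of a monotone $f$ are not enough, and for discontinuous $f$ the set $M$ can fail to be closed, so you should either add continuity (and handle general $f$ by approximation, as the paper does later) or argue closedness differently. Also, among your ``routine preliminaries'' you verify reflexivity but not transitivity or antisymmetry; antisymmetry is precisely what fails in the slack regime described above, so it cannot be waved through if Theorem~\ref{teo:Strassen} is to be applied as stated.
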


\begin{proof}

We introduce the following relation: Let $f$ be an increasing continuous and positive function and $\sigma^2, k^2$ positive constants such that $\sigma^2 \leq k^2$. We define $\preceq_f$ as
\begin{equation}\label{def:order}
    x \preceq_f y \qquad \text{if and only if} \qquad \sigma^2 f(x +z) \leq k^2 f(y +z), \quad \forall z \in \mathbb{R}.
\end{equation}
We denote $x \asymp_f y$ if $x \preceq_f y$ and $y \preceq_f x$. Notice that $\preceq_f$ is a partial order in $\mathbb{R}$. Indeed, $x \preceq_f x$ because $f$ is positive and $\sigma^2 \leq k^2$. Let $x \preceq_f y$ and $y \preceq_f w$. Suppose that $x \npreceq_f w$, then exist $\hat{z} \in \mathbb{R}$ such that $\sigma^2 f(x + \hat{z}) > k^2 f(w + \hat{z})$, but with $x \preceq_f y$, this implies that $f(w + \hat{z}) < f(y+\hat{z})$.
As $f$ is a increasing function, we obtain $w+ \hat{z} < y +\hat{z}$, that is $w < z.$ Notice that for all $z \in \mathbb{R}$,
\begin{align*}
    w < y &\Longrightarrow w + z < y+z  \Longrightarrow f(w+z) \leq f(y+z)  \Longrightarrow \sigma^2 f(w+z) \leq k^2 f(y+z),
\end{align*}
that is, $w \preceq_f y$. Then, we obtain that $y \asymp_f w$, which implies $x \preceq_f w$, which is a contradiction, because we are supposing that $x \npreceq_f w$. This proves transitivity.

Now notice that
\begin{align*}
    M &= \{ (x,y): x \preceq_f y\}\\
    &= \{(x,y): \sigma^2 f(x+z) \leq k^2 f(y+z) , \quad \forall z \in \mathbb{R}\}\\
    &=  \left\{ (x,y): f(z) \leq f^{-1}\left( \frac{k^2}{\sigma^2}f(y+z)-x \right) , \quad \forall z \in \mathbb{R}\right\}\\
    &=  \left\{ (x,y):  \bigcap_z f^{-1} \left\{\left(-\infty, f^{-1}\left( \frac{k^2}{\sigma^2}f(y+z)-x \right) \right] \right\} \right\},
\end{align*}
which by the continuity of $f$ is a closed set.  Now, the hypothesis $X \stdom Y$ implies that $X \preceq_f Y$, and by Theorem \ref{teo:Strassen} we obtain that exist a coupling of law $\Pi_f$ such that $X \preceq_f Y$, $\Pi_f$- almost surely.
\end{proof}

\begin{proof}[Proof of Theorem \ref{teo:subgaussianfunctional}]
Let $f$ be a absolutely continuous increasing and convex function with continuous derivative. Using the Definition \ref{DefTransfdireccion} with the function $g(x_1, \ldots, x_n) = f \left(\sum_{i=1}^n x_i \right)$,
\begin{align*}
    \mathbb{E}Wf(W) &= \sum_{i=1}^n \mathbb{E} \Delta_i f(W) \\
    &= \sum_{i=1}^n \mathbb{E}\Delta_i g(\Delta_1, \Delta_2, \ldots, \Delta_n)\\
    &= \sum_{i=1}^n \sigma_i^2 \mathbb{E} \partial_i g(\Delta_1^{*(i)}, \ldots, \Delta_n^{*(i)}).
\end{align*}
From Corollary \ref{corPropZerobiasdire}, it follows that $\Delta_i^{*(i)} = \Delta_i^*$ for any $i = 1, \ldots, n$. First, we see the case $i=1$. The hypothesis $\Delta_1^* \stdompar{\sigma_1,k_{11}} \Delta_1$ with the Lemma \ref{lem:strassen}, implies that for the function $\partial_1 g$ which is increasing continuous and positive, $\Delta_1^* \preceq_{\partial_1 g} \Delta_1$ almost surely, then
\begin{align*}
    \mathbb{E} \partial_1 g(\Delta_1^{*(1)}, \Delta_2^{*(1)}, \ldots, \Delta_n^{*(1)})
    &\leq  \frac{k_{11}^2}{\sigma_1^2} \mathbb{E}\partial_1 g(\Delta_1,\Delta_2^{*(1)}, \ldots, \Delta_n^{*(1)}).
\end{align*}
Now, using the hypothesis $\Delta_j^{*(1)} \stdompar{\sigma_1,k_{j1}} \Delta_j$ implies that for any $j\neq 1$, with the function $\partial_1 g$ and the Lemma \ref{lem:strassen}, that $\Delta_j^{*(1)} \preceq_{\partial_1 g} \Delta_j$ almost surely, then
\begin{align*}
    \mathbb{E} \partial_1 g(\Delta_1, \Delta_2^{*(1)}, \Delta_3^{*(1)}, \ldots, \Delta_n^{*(1)}) &\leq  \frac{k_{21}^2}{\sigma_1^2}\mathbb{E}\partial_1 g(\Delta_1, \Delta_2, \Delta_3^{*(1)},\ldots, \Delta_n^{*(1)}),
\end{align*}
Then, with the same argument and noticing that in each step we use a different coupling, we obtain
$$\mathbb{E} \partial_1 g(\Delta_1^{*(1)}, \ldots, \Delta_n^{*(1)}) \leq \left(\prod_{j=1}^n \frac{k_{j1}^2}{\sigma_1^2} \right) \mathbb{E} \partial_1 g(\Delta_1, \ldots, \Delta_n).$$
With a completely analogous reasoning, we obtain for any $i$,
$$\mathbb{E} \partial_i g(\Delta_1^{*(i)}, \ldots, \Delta_n^{*(i)}) \leq \left(\prod_{j=1}^n \frac{k_{ji}^2}{\sigma_i^2} \right) \mathbb{E} \partial_i g(\Delta_1, \ldots, \Delta_n).$$
On the other hand,  $g(x_1, \ldots, x_n) = f(\sum_{i=1}^n x_i)$, then for any $i$, $\partial_i g(x_1, \ldots, x_n) = f'(\sum_{i=1}^n x_i),$ that is
$$\mathbb{E} \partial_i g(\Delta_1^{*(1)}, \ldots, \Delta_n^{*(1)}) \leq   \left(\prod_{j=1}^n \frac{k_{ji}^2}{\sigma_i^2} \right)\mathbb{E} f'(\Delta_1+ \ldots + \Delta_n) = \left(\prod_{j=1}^n \frac{k_{ji}^2}{\sigma_i^2} \right) \mathbb{E} f'(W).$$ Therefore, we get that
\begin{align*}
    \sigma_W^2 \mathbb{E}f'(W^*) &= \mathbb{E}Wf(W) \\
    &=\sum_{i=1}^n \sigma_i^2 \mathbb{E} \partial_i g(\Delta_1^{*(i)}, \ldots, \Delta_n^{*(i)}) \\
    &\leq \sum_{i=1}^n \sigma_i^2  \left(\prod_{j=1}^n \frac{k_{ji}^2}{\sigma_i^2} \right)\mathbb{E}f'(W) \\
    &=  \mathbb{E}f'(W) \sum_{i=1}^n \sigma_i^{2-2n} \prod_{j=1}^n k_{ji}^2\\
    &=  K^2 \mathbb{E}f'(W),
\end{align*}
for any increasing continuous and positive function $f'$. Now, any increasing and positive function $f$ can be approximated by increasing continuous and positive functions, then we obtain from Theorem \ref{Teo:subGaus_equivalences} that $W \in \mathcal{G}_+(K^2)$.
\end{proof}

\subsection{Examples of applications of Theorem \ref{teo:subgaussianfunctional}}
In this section, we present four examples of increasing complexity that show the usefulness of Theorem \ref{teo:subgaussianfunctional}. The first three examples deal with sums of light-tailed random variables for three different dependence structure. The last example allows us to tackle the case of general functions of weakly dependent random variables.

\paragraph{Example 1. (Dependency Neighborhoods)} We say that a collection of random variables $\{Y_i\}_{1\le i \le n}$ has dependency neighborhoods $N_i \subset [n]$ with $i=1, \ldots, n$ if for every $1\le i\le n$ we have that $i \in N_i$ and $Y_i$ is independent of $\{Y_j\}_{j \notin N_i}$. In particular, this property implies that $Y_j^{(i)} = Y_j$ for any $j \notin N_i$.

Suppose, now, that the random variables $\{\Delta_i \}_{i=1, \ldots, n}$ have dependency neighborhoods $\{N_i\}_i$. Then, we can apply Theorem \ref{teo:subgaussianfunctional} with the choice $k_{ji}^2=\sigma_i^2$ in  \eqref{hip_Deltaij} for every $j \notin N_i $. Therefore
$$K^2 = \sum_{i=1}^n \sigma_i^{2-2n} \prod_{j=1}^n k_{ji}^2 = \sum_{i=1}^n \sigma_i^{2-2n} \prod_{j \in N_i} k_{ji}^2 \prod_{j \notin N_i}\sigma_i^2   =\sum_{i=1}^n \sigma_i^{2-2|N_i|}  \prod_{j \in N_i} k_{ji}^2,$$
and $W$ has sub-Gaussian right tails of constant $K^2$. In the special case where all $\{\Delta_i \}_{i=1, \ldots, n}$ are independents, we obtain that $k_{ji}^2 = \sigma_j^2$ for all $i \neq j$ and $|N_i| = 1$ which gives
$$K^2=\sum_{i=1}^n \sigma_i^{2-2|N_i|}  \prod_{j \in N_i} k_{ji}^2  = \sum_{i=1}^n  k_{ii}^2.$$
As seen in the definition of the weighted stochastic domination, the constants $k_{ji}$ are forced to be greater than the variance $\sigma_j$. In consequence, the most constants $k_{ji}$ being equal to $\sigma_j$, the smaller the constant $K$ is.
This case is typically adapted to random graphs where the dependence in the random variables is fairly local. In most of the classical cases (Erdós-Reyni random graphs for example), the dependence neighbor is microscopic ($|N_i|\ll n$). In that case, the constant $K^2$ of full independence case is of the good order.

\paragraph{Example 2. (Linear combination of orthogonal random variables)} Let $\{Y_i \}_{i = 1, \ldots n}$ be a collection of orthogonal random variables with zero mean and unit variance.
Assume that $Y_i^* \stdompar{1,k_{ii}} Y_i$ for all $i=1, \ldots n$ and $Y_j^{*(i)} \stdompar{1, k_{ji}} Y_j$ for all $j \neq i$. For any vector $a =( a_1, \ldots, a_n)$ with non negative entries, define $Y_a = \sum a_i Y_i$ and $\sigma^2 = \mathrm{Var}(Y_a) = \sum a_i ^2 = ||a||^2$.
Finally, define
$$Z_a = \frac{1}{\sigma} \sum_{i =1}^n a_i Y_i =: \sum_{i =1}^n \Delta_i.$$
The variances of the variables $\Delta_i$ are given by $\sigma_i^2 = a_i^2 / \sigma^2.$ The two hypothesis on the class $\{Y_i\}_i$ imply that $\Delta_i^{*} \stdompar{\sigma_i, \sigma_i k_{ii}} \Delta_i$ for all $i= 1, \ldots, n$ and $\Delta_j^{*(i)} \stdompar{\sigma_i, \sigma_i k_{ji}} \Delta_j$ for all $j \neq i$ respectively. Indeed, for any increasing and positive function $f$, consider the function $g(x) = f(\sigma_i x)$ and apply the Definition \ref{def:stdom}. Then, we obtain
$$K^2 = \sum_{i=1}^n \sigma_i^{2-2n} \prod_{j = 1}^n \sigma_i^2 k_{ji}^2 = \sum_{i=1}^n \sigma_i^{2} \prod_{j = 1}^n k_{ji}^2  = \sum_{i=1}^n \frac{a_i^2}{\sigma^2} \prod_{j = 1}^n  k_{ji}^2 = \frac{1}{||a||^2}\sum_{i =1}^n a_i^2 \prod_{j=1}^n k_{ji}^2.$$
Once again, one can see that when we assume that the random variables $Y_i$ are independent, all the constants $k_{ji}=1$ for $i\neq j$. We are reduce to consider a sum of independent sub-Gaussian random variables. Hence, Theorem \ref{teo:subgaussianfunctional} shows that in that case $Z_a$ is sub-Gaussian $\mathcal{G}(K^2)$ with $K^2=\frac{1}{||a||^2}\sum_{i =1}^n a_i^2 k_{ii}^2$.
By analogy with Gaussian vectors, we see that the sub-Gaussian property is stable by linear combinations and the resulting sub-Gaussian constant behaves exactly in the same way than the variance for Gaussian random vectors.

\paragraph{Example 3. (Bounded random variables)} In this example, we consider the variable $W=\sum_i \Delta_i$ of Theorem \ref{teo:subgaussianfunctional} and assume that the random variables $\{ \Delta_i\}_{i=1, \ldots, n}$ have compact support, namely $\mathrm{Supp} \Delta_i = [a_i, b_i]$, with $a_i < 0 < b_i$, and we also assume that the random variables $\Delta_i$ have unimodal density $f_{\Delta_i}$.
Then, from direct calculation using \eqref{zerobiasdensity}, we obtain $f_{\Delta_i^*}(x) \leq \max(a_i^2, b_i^2)/2 f_{\Delta_i}(x)$, which implies that
$$\sigma_i^2 \mathbb{E}f(\Delta_i^*) \leq k_{ii}^2 \mathbb{E}f(\Delta_i),$$
for any increasing and positive function $f$ such that the expectations exist and with the constants $k_{ii}^2 = \max(a_i^2, b_i^2)/2$. Additionally, by \eqref{defZeroXj} and the fact that $\Delta_i$ is of compact support, we see that
$$\sigma_i^2 \mathbb{E}f'(\Delta_j^{*(i)}) = \mathbb{E}_{ij} \Delta_i^2 f'(\Delta_j) \leq \max(a_i^2, b_i^2) \mathbb{E}f'(\Delta_j).$$
In terms of weighted stochastic domination, the later is equivalent to $\Delta_{j}^{*(i)} \stdompar{\sigma_i, k_{ji}} \Delta_j$ with $k_{ji}^2 = \max(a_i^2, b_i^2)$. Then, by Theorem \ref{teo:subgaussianfunctional}, $W = \sum_i \Delta_i$ has sub-Gaussian tails of constant
$$K^2 = \sum_{i =1 }^n \sigma_i^{2-2n} \frac{1}{2} \max(a_i^2, b_i^2) \prod_{j\neq i}\max(a_i^2, b_i^2)  =  \sum_{i =1 }^n \sigma_i^{2-2n} \frac{1}{2} \max(a_i^{2n}, b_i^{2n}).$$
If the random variables $\{ \Delta_i\}_{i=1}^n$ are independent, we can take $k_{ji}^2 = \sigma_i^2$, and obtain
$$K^2 = \frac{1}{2} \sum_{i=1}^n \max(a_i^2, b_i^2) \leq \frac{1}{2} \sum_{i=1}^n (b_i - a_i)^2.$$
Note that this last constant is equal to the constant of sub-Gaussian constant given by the classical Hoeffding's inequality for sums of bounded independent variables. Once again, we see that Theorem \ref{teo:subgaussianfunctional} generalizes a classical result on sub-Gaussian concentration.

\paragraph{Example 4. (Function with bounded differences)} In this example, we are interested to see the consequences of our main theorem for functions of independent random variables. It is known in the concentration of measure community that a function $f$ that depends regularly on its entries and if no random variable have a ``macroscopic" influence on the value of the function, then $f$ concentrates around its mean value $\esp{f}$. In this example, we obtain directly McDiarmid inequality on bounded difference functions (see \cite{boucheron2013concentration} for a compehensive proof) by direct use of previous example. We say that a function $f : \mathcal{X}^n \to \mathbb{R} $  has the bounded differences property if for some nonnegative constants  $c_1, \ldots, c_n$,
\begin{equation}\label{def:boundeddifferencesfunction}
    \sup_{x_1, \ldots, x_n \\~ \\ x_i\in \mathcal{X}} |f(x_1, \ldots, x_n)- f(x_1, \ldots, x_{i-1}, x_i', x_{i+1}, \ldots, x_n) | \leq c_i,
\end{equation}
for $1 \leq i \leq n$.

Let $X_1, \ldots, X_n$ be independent random variables in some space  $\mathcal{X}$ and let $f$ a function with bounded differences and consider the random variable $Z = f(X_1, \ldots, X_n)$. We can write $Z - \mathbb{E}Z$ as a sum of martingale differences for the Doob filtration as
\begin{equation}\label{martingaledifferences}
Z- \mathbb{E}[Z] = \sum_{i=1}^n (Z_i-Z_{i-1}),
\end{equation}
where
\begin{equation}
    Z_i = \mathbb{E}[Z | X_1, \ldots, X_i].
\end{equation}
The bounded differences property of $f$ implies that the support of the random variables $\Delta_i=Z_i - Z_{i-1}$ is a subset of $[a_i, b_i]$ for some constants $a_i < 0  < b_i$ with $b_i- a_i \leq c_i$.
The independence of the random variables $X_1,\dots,X_n$ implies that the random variables $\Delta_i$ are uncorrelated, then, we can use the previous example to obtain that $Z$ has sub-Gaussian tails. Furthermore, if the support of the random variables is symmetric, meaning that $-a_i = b_i = c_i/2$, we obtain that $\sigma_i^2 = \mathrm{Var}(Z_i-Z_{i-1}) \leq (b_i-a_i)^2/4 = c_i^2/4$, and
$$K^2 = \sum_{i =1 }^n \sigma_i^{2-2n} \frac{1}{2} \max(a_i^{2n}, b_i^{2n}) = \sum_{i=1}^n \left( \frac{c_i}{2}\right)^{2-2n} \frac{1}{2} \left( \frac{c_i}{2}\right)^{2n} = \frac{1}{8} \sum_{i=1}^n c_i^2.$$
Note that the constant $K^2$ is precisely the constant of sub-Gaussianity present in \cite[Theorem 6.2]{boucheron2013concentration}.

\subsection{Hoeffding's statistics}
In this section, we present a concrete example where the random variables that come into play are naturally slightly dependent. The object of interest is the Hoeffding statistic given by
\begin{equation}\label{HoeffdingStatistics}
    Y = \sum_{i=1}^n a_{i\pi(i)}
\end{equation}
where $\pi$ be a random uniform permutation in the symmetric group $S_n$ and $A = (a_{ij})_{1 \leq i,j \leq n}$ is an $n \times n$ matrix with real entries. This random variable is of great interest in permutation test (see for example \cite{chen2011normal}), or in a kind of limit theorems, called Hoeffding combinatorial central limit theorems (see \cite{MR44058}).
Concentration bounds for $Y$ are given by \cite{MR2288072,goldstein2014concentration}, where they obtain sub-Gamma right tails of type

\begin{equation}\label{subgamaChatt_bound}
    \mathbb{P}\left( |Y - \mathbb{E}Y | \geq t\right) \leq 2 \exp\left( \frac{-t^2}{4\mathbb{E}Y +2t} \right),
\end{equation}
or
\begin{equation}\label{subgamaGolds_bound}
    \mathbb{P}\left( |Y - \mathbb{E}Y | \geq t\right) \leq 2 \exp\left( \frac{-t^2}{2\mathrm{Var}(Y) +16t} \right).
\end{equation}
We see in the sequel that our approach allows us to obtain sub-Gaussian tails for this statistics.
By a direct calculation we obtain the first and second moments of the random variable $a_{i\pi(i)}$,
$$\mathbb{E}a_{i\pi(i)} = \frac{1}{n} \sum_{j=1}^n a_{ij}, \qquad \text{and}\qquad \mathbb{E}a_{i\pi(i)}^2 = \frac{1}{n} \sum_{j=1}^n a_{ij}^2.$$
Define
\begin{equation}
    \hat{Y} = Y - \mathbb{E}Y = \sum_{i=1}^n \left(\frac{1}{n} \sum_{j=1}^n(a_{i\pi(i)} - a_{ij}) \right) = \sum_{i=1}^n \Delta_i.
\end{equation}
The random variables $\Delta_i$ only depend on the entries of the row $i$ of the matrix $A$, then the support of the random variable $\Delta_i$ lies in the interval $[-c_i, c_i]$, where $c_i = \max_{j}a_{ij}- \min_{j}a_{ij}$. Then, using Example 3, we obtain that $\hat{Y} \in \mathcal{G}_{+}(K^2)$, with
$$K^2  \leq \frac{1}{2} \sum_{i=1}^n c_i^2 = \frac{1}{2} \sum_{i=1}^n \left(\max_{j}a_{ij}- \min_{j}a_{ij} \right)^2, $$
which implies that for all $t \geq 0$,

\begin{equation}\label{hoefdingTail}
    \mathbb{P}(\hat{Y} \geq  t) \leq \exp \left( -\frac{t^2}{ \sum_{i=1}^nc_i^2}\right).
\end{equation}
It is also posible to extend this concentration result to functions of the random variables $a_{i\pi(i)}$. Indeed, for $f$ a $L$-Lipschitz function, consider the random variable
\begin{equation*}\label{hoeffdingLipschitz}
Z = f(a_{1\pi(1)}, \ldots, a_{n\pi(n)}).
\end{equation*}
Since the function $f$ only depends of the entries of the matrix $A$, we obtain that $f$ is a function with bounded differences with constants $Lc_1, \ldots , L c_n$. Then $Z- \mathbb{E}Z$ has sub-Gaussian tails with constant $$K_L^2 \leq  \frac{L^2}{2} \sum_{i=1}^n \left(\max_{j}a_{ij}- \min_{j}a_{ij} \right)^2,$$
which implies that for all
 $t \geq 0$,
\begin{equation}\label{hoefdingTailZ}
    \mathbb{P}(Z - \mathbb{E}Z \geq  t) \leq \exp \left( -\frac{t^2}{ L^2\sum_{i=1}^nc_i^2}\right).
\end{equation}

\section{Sub Poisson and Size bias control}

\subsection{Size bias Coupling for real valued random variables}

In this section we expose a similar result to Theorem \ref{teo:subgaussianfunctional} in the case of the size bias transform. In \cite{Arratia2015Boundedsizebias}, the authors show that if a coupling $(X,X^s)$ such that $X^s\le X+c$ almost surely exists then the random variable $X$ has a sub-Gamma (in fact sub-Poisson) right tail.
The results presented below deal with the converse: What conditions are necessary to obtain a coupling between the two non-negative random variables $X^s$ and $X$ and such that $X^s\le X+c$ almost surely?
For any positive random variable $X$ with finite mean $\mu$, the size bias  transform $X^s$ exists as seen in Definition \ref{def:sizebias}. When there exists a positive constant $c$ such that $X^s \leq X+c$ almost surely, the result \cite[Lemma 2.1]{Arratia2015Boundedsizebias} shows that for all $t\geq 0$,

\begin{equation}
\label{eq:poissonlossmemory}
    \mathbb{P}(X \geq t) \leq \frac{\mu}{t} \mathbb{P}(X \geq t-c).
\end{equation}
This allows to obtain a concentration bounds of type \cite[Theorem 1.3]{Arratia2015Boundedsizebias}\begin{equation*}
    \mathbb{P}(X \geq t) \leq \exp\left( \frac{t-\mu}{c} - \frac{t}{c} \log\left( \frac{t}{\mu}\right)\right).
\end{equation*}
The authors refer to it as Gamma function bound since it decays at least as fast as the reciprocal of a Gamma function. We are interested in the converse problem, that is, to understand when we can obtain a bounded coupling.

The next theorem give conditions for the existence of a bounded coupling when we have an inequality between the density (or the probability mass function) of the size bias transform and a right-shift of the density of the random variable. See Figure \ref{img} for a visual interpretation. This condition is a density version of \eqref{eq:poissonlossmemory}.

\begin{teo}\label{teo:boundsizebias}
Let $X$ be a positive random variable with finite mean $\mu$. Suppose that exist positive constants $c$ and $t_0$ such that
\begin{equation}\label{sizebiascondition}
    \frac{x}{\mu}f_X(x) \leq f_X(x-c), \qquad \forall x \geq t_0,
\end{equation}
where $f_X$ is the probability mass function if $X$ is discrete or the density function if $X$ is absolutely continuous. Then, for $X^s$ having the size bias distribution of $X$,
\begin{equation}\label{eqsizebiasprob}
    \mathbb{P}(X^s \geq t) \leq \mathbb{P}(X+c \geq t), \qquad \forall t \geq t_0.
\end{equation}

\end{teo}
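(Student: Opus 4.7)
The plan is to exploit the explicit formula for the density (or probability mass function) of $X^s$ recalled in Equation~\eqref{sizebiasdensity}, namely $f_{X^s}(x) = \frac{x}{\mu} f_X(x)$. With this identity at hand, the hypothesis \eqref{sizebiascondition} reads as the pointwise inequality $f_{X^s}(x) \leq f_X(x-c)$ for all $x \geq t_0$, that is, the density of $X^s$ is dominated on the tail by the density of $X$ shifted to the right by $c$. The desired stochastic comparison on the tails should then follow from a one-line integration.

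Concretely, for any $t \geq t_0$ I would write
\begin{equation*}
    \mathbb{P}(X^s \geq t) = \int_t^\infty f_{X^s}(x)\, dx = \int_t^\infty \frac{x}{\mu} f_X(x)\, dx,
\end{equation*}
apply \eqref{sizebiascondition} pointwise on the domain of integration (which is legitimate because $x \geq t \geq t_0$ throughout), and finally perform the change of variables $y = x-c$:
\begin{equation*}
    \mathbb{P}(X^s \geq t) \leq \int_t^\infty f_X(x-c)\, dx = \int_{t-c}^\infty f_X(y)\, dy = \mathbb{P}(X \geq t-c) = \mathbb{P}(X+c \geq t).
\end{equation*}
The discrete case is handled identically by replacing the integrals by sums over the support of $X$, using the probability mass function version of~\eqref{sizebiasdensity}.

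I do not anticipate a genuine obstacle: all the work has been absorbed into the hypothesis, which has been tailored precisely to convert into a pointwise density comparison. The only subtlety worth pointing out is the role of $t_0$: the conclusion only holds for $t \geq t_0$ because this is exactly the range in which the pointwise bound on $f_{X^s}(x)$ is available, and for smaller values of $t$ the contribution of the integrand (or summand) on $[t, t_0)$ is not controlled by the assumption.
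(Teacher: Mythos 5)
Your proof is correct and follows essentially the same route as the paper: rewrite the hypothesis as the pointwise bound $f_{X^s}(x)\le f_X(x-c)$ on $x\ge t_0$ via the density formula for the size bias transform, then integrate (or sum) over the tail and shift variables. Your remark about why the conclusion is restricted to $t\ge t_0$ is accurate but not needed for the argument.
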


\begin{proof}
For Definition \ref{sizebiasdensity}, we have that the density of $f_{X^s} = x f_X / \mu$, then the inequality \eqref{sizebiascondition} is  $f_{X^s}(x) \leq f_X(x-c)$ for all $x \geq t_0$. Therefore, for any $t \geq t_0,$
$$\mathbb{P}(X^s \geq  t) = \int \mathbb{I}_{\{x \geq t\}}f_{X^s}(x)dx  \leq  \int \mathbb{I}_{\{x \geq t\}} f_{X}(x-c)dx = \mathbb{P}(X \geq t-c),$$
that is, $ \mathbb{P}(X^s \geq t) \leq \mathbb{P}(X+c \geq t)$, for all $t \geq t_0.$
\end{proof}

Equation \eqref{eq:poissonlossmemory} and Theorem \ref{teo:boundsizebias} together show the equivalence between sub-Gamma concentration and the existence of a coupling such that $X^s\le X+c$.
Given that we are in the case of positive random variables, by a direct calculation we obtain that if $x \leq \mu$, then  $f_{X^s}(x) \leq f(x)$ and if $x \geq \mu$, then $f_{X^s}(x) \geq f(x)$, so the constants $c$ and $t_0$ in the previous theorem are the ones corresponding to the context where a shift of the original density by $c$ upper bounds the density of $X^s$. This way the condition \eqref{sizebiascondition} is satisfied, as we show in the Figure \ref{img}.

\begin{figure}[h] \label{img}
\centering
\includegraphics[width=12
cm]{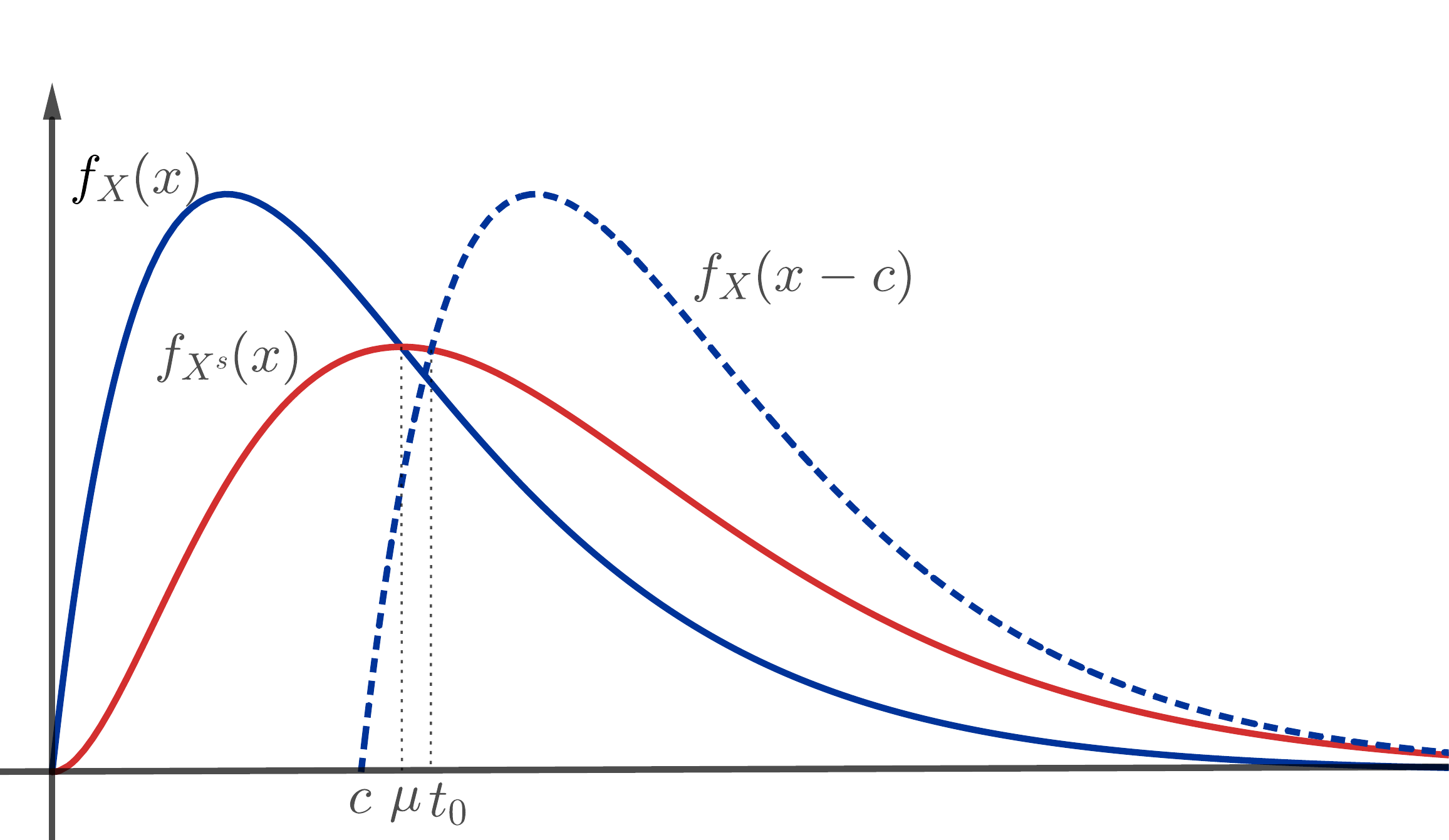}
\caption{Density of $f_X$ and $f_{X^s}$ with the shift by $c$ as in Theorem \ref{teo:boundsizebias}.}
\end{figure}

\subsection{Sum of independent random variables}

One of the most important properties of the size bias transform is that it has an easy explicit construction for sums of random variables. The case of sums of independent random variables is almost trivial since the construction rely only on randomly replacing a random variable for his size bias version with probability proportional to its mean, as shows the next result  \cite[Corollary 2.1]{chen2011normal}.

\begin{cor}
Let $Y = \sum_{i = 1}^n X_i$, where $X_1,\dots, X_n$ are independent, nonnegative random variables with means $\mathbb{E} X_i = \mu_i, i = 1, \ldots, n$. Let $I$ be a random index with distribution given by
$$\mathbb{P}(I=i) = \frac{\mu_i}{\sum_{i=1}^n \mu_i},$$
independent of all other variables. Then, upon replacing the summand $X_I$ selected by $I$ with a variable $X_I^s$ having its size biased distribution, independent of $X_j$ for $j \neq I$, we obtain
$$Y^I = Y - X_I + X_I^s,$$
a variable having the $Y$-size bias distribution.
\end{cor}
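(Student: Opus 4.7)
The plan is to verify the defining identity of the size bias distribution from Definition \ref{def:sizebias} for $Y$, namely
\begin{equation*}
\mathbb{E}[Y f(Y)] = \mu \, \mathbb{E}[f(Y^I)],
\end{equation*}
where $\mu = \mathbb{E}Y = \sum_{i=1}^n \mu_i$, for every function $f$ such that $\mathbb{E}[Yf(Y)] < \infty$. Once this is established, uniqueness of the size bias law forces $Y^I$ to have the $Y$-size bias distribution.

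First I would decompose the left-hand side linearly as
\begin{equation*}
\mathbb{E}[Y f(Y)] \;=\; \sum_{i=1}^n \mathbb{E}\!\left[ X_i \, f\!\left(X_i + \textstyle\sum_{j\neq i} X_j\right)\right].
\end{equation*}
For each fixed $i$, I would condition on the collection $\{X_j\}_{j\neq i}$ and set $g_i(x) = f(x + \sum_{j\neq i} X_j)$, treated as a (random) function of its single argument $x$. Since $X_i$ is independent of the other summands, the inner expectation becomes $\mathbb{E}[X_i g_i(X_i) \mid \{X_j\}_{j\neq i}]$, and applying Definition \ref{def:sizebias} to $X_i$ yields $\mu_i \, \mathbb{E}[g_i(X_i^s) \mid \{X_j\}_{j\neq i}]$, where $X_i^s$ is taken independent of the other $X_j$'s as in the statement. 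Taking outer expectations and summing over $i$ gives
\begin{equation*}
\mathbb{E}[Y f(Y)] \;=\; \sum_{i=1}^n \mu_i \, \mathbb{E}\!\left[ f\!\left(X_i^s + \textstyle\sum_{j\neq i} X_j\right)\right] \;=\; \sum_{i=1}^n \mu_i \, \mathbb{E}[f(Y - X_i + X_i^s)].
\end{equation*}

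Finally, I would recognize the last sum as a mixture with weights $\mu_i/\mu$, which is exactly the law of $I$:
\begin{equation*}
\sum_{i=1}^n \mu_i \, \mathbb{E}[f(Y - X_i + X_i^s)] \;=\; \mu \sum_{i=1}^n \mathbb{P}(I=i)\, \mathbb{E}[f(Y - X_i + X_i^s) \mid I=i] \;=\; \mu \, \mathbb{E}[f(Y^I)],
\end{equation*}
using that $I$ is independent of all other variables. This establishes the required identity. There is no real obstacle here: the only subtlety is the correct bookkeeping of independence when conditioning, namely that after conditioning on $\{X_j\}_{j\neq i}$ the variable $X_i^s$ is coupled to $X_i$ alone and is independent of the other summands, so the substitution $X_i \leadsto X_i^s$ inside $f$ is legitimate.
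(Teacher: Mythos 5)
Your proof is correct. Note that the paper does not actually prove this statement: it imports it verbatim as \cite[Corollary 2.1]{chen2011normal}, so there is no internal argument to compare against. Your direct verification --- decomposing $\mathbb{E}[Yf(Y)]=\sum_i\mathbb{E}[X_i f(Y)]$, conditioning on $\{X_j\}_{j\neq i}$ so that the size-bias identity for $X_i$ can be applied to the (conditionally deterministic) function $g_i$, and then reading off the sum $\sum_i \mu_i\,\mathbb{E}[f(Y-X_i+X_i^s)]$ as the mixture over the independent index $I$ --- is exactly the standard argument behind the cited result, and your bookkeeping of the independence (that $X_i^s$ is coupled only to $X_i$ and independent of the remaining summands, and that $I$ is independent of everything) is the only place where care is needed; you handle it correctly. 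The final appeal to uniqueness of the size-bias law is legitimate since the defining identity over, say, bounded measurable $f$ determines the distribution of $Y^s$.
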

Note that there exists a general result that gives a method to obtain the size bias transform for a sum of dependent random variables. In this case, to replace a random variable by his size bias transform, it is necessary to adjust the remaining random variables conditionally to this new value. For further details see \cite[Proposition 2.2]{chen2011normal}.

The following result shows the behavior of the sum of independent random variables that have sub-Gamma concentration in the sense of Equation \eqref{eq:poissonlossmemory}. As in Theorem \ref{teo:subgaussianfunctional}, the sum inherits of the almost sure control of its size bias transform by a constant plus the original random variable.
\begin{teo}
  \label{teo:subGamma}
Let $X_1, \ldots, X_n$ be independent, nonnegative random variables with means $\mathbb{E} X_i = \mu_i, i = 1, \ldots, n$, such that the size bias transform of each $X_i$ satisfies that $X_i^s \leq X_i +c_i$, for some constants $c_i$. Then, for $Y = \sum_{i = 1}^n X_i$ we have that
$$Y^s \leq Y + \max c_i.$$
\end{teo}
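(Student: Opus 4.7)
The plan is to build the coupling explicitly using the construction from the corollary that immediately precedes the theorem, and then transfer the per-coordinate bounded size bias couplings to the sum.

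First I would set up the probability space so that all the relevant couplings live together. By hypothesis, for each $i$ there exists a coupling of $X_i$ with a random variable $X_i^s$ having the size bias distribution of $X_i$ such that $X_i^s \le X_i + c_i$ almost surely. I would take the $n$ pairs $(X_i, X_i^s)_{i=1,\ldots,n}$ to be mutually independent across $i$ (this is possible because the marginals $X_1,\ldots,X_n$ are assumed independent, so we can realize them jointly on a product space together with the given conditional couplings). Then introduce a random index $I$ on the same space, independent of everything else, with $\mathbb{P}(I=i) = \mu_i / \sum_{j} \mu_j$.

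Next, I would apply the corollary quoted just above the theorem: for independent nonnegative summands, the random variable
\begin{equation*}
    Y^I = Y - X_I + X_I^s
\end{equation*}
has the size bias distribution of $Y = \sum_{i=1}^n X_i$. On the event $\{I=i\}$, by the per-coordinate bounded coupling one has $X_I^s - X_I = X_i^s - X_i \le c_i \le \max_j c_j$ almost surely. Since this bound holds on each atom of $I$, it holds almost surely unconditionally, and therefore
\begin{equation*}
    Y^I - Y = X_I^s - X_I \le \max_{j} c_j \quad \text{a.s.}
\end{equation*}
Setting $Y^s := Y^I$ gives the claimed bounded coupling.

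The argument is essentially a bookkeeping step; the only point that needs care is ensuring that the $X_I^s$ provided by the corollary can legitimately be taken to be the one coming from the given bounded marginal coupling, rather than an abstract independent size biased copy. This is not an obstacle, since the corollary only requires that, conditionally on $I=i$, the replacement has the size bias law of $X_i$ and is independent of $\{X_j\}_{j\ne i}$; both conditions are preserved when we use the bounded coupling, because independence across $i$ of the pairs $(X_i, X_i^s)$ makes $X_i^s$ independent of $\{X_j\}_{j\ne i}$, and the prescribed marginal law of $X_i^s$ is exactly the size bias distribution of $X_i$.
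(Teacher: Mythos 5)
Your proposal is correct and follows essentially the same route as the paper: both use the preceding corollary's construction $Y^I = Y - X_I + X_I^s$ with $I$ chosen proportionally to the means, and transfer the per-coordinate bounds $X_i^s \le X_i + c_i$ to the sum by conditioning on $I$. Your version is, if anything, slightly more careful, since you make explicit the joint construction of the pairs $(X_i, X_i^s)$ and verify that the bounded marginal couplings satisfy the independence requirement of the corollary, a point the paper's proof leaves implicit.
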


It is interesting to see that the behavior of the constant $C$ in the result $Y^s\le Y+C$ is fundamentally different from Theorem \ref{teo:subgaussianfunctional}. Indeed, here the resulting constant is the maximum of all the constants $c_i$.

\begin{proof}
Let $Y^{(i)}= Y + X_i^s - X_i$. Then, for each $i=1, \ldots, n$,
\begin{align*}
    \mathbb{P}(Y^{(i)} \leq Y +  \max c_i ) &= \mathbb{P}(Y + X_i^s - X_i\leq Y +  \max c_i ) \\
    &= \mathbb{E}[\mathbb{E}[\mathbb{I}_{Y + X_i^s - X_i\leq Y +  \max c_i}| \sum_{j \neq i} X_j] \\
    &= \mathbb{E}[\mathbb{E}[\mathbb{I}_{X_i^s \leq X_i +  \max c_i}| \sum_{j \neq i} X_j] \\
    &= \mathbb{E}[\mathbb{I}_{X_i^s \leq X_i +  \max c_i}] \\
    &= \mathbb{P}(X_i^s \leq X_i +  \max c_i) \\
    &\geq \mathbb{P}(X_i^s \leq X_i +  c_i) =1
\end{align*}
Then, conditioning on $I$, we obtain
$$\mathbb{P}(Y^s \leq Y + \max c_i) = \sum_{i=1}^n \mathbb{P}(Y^{(i)} \leq Y +  \max c_i ) \mathbb{P}(I=i) \geq \sum_{i=1}^n \mathbb{P}(I=i) =1 .$$
\end{proof}

For a sanity check, one can note that for a Poisson random variable $X$ of parameter $\lambda$, $X^s=X+1$ and so if we have a class $\{X_i\}_i$ of Poisson random variables of parameters $\{\lambda_i\}_i$, we have trivially $c_i=1, \forall i$. But, since the sum of independent Poisson random variables is also Poisson, we have that $Y^s=Y+1$ which is coherent with the constant resulting of Theorem \ref{teo:subGamma} where $1=\max_i c_i$. Hence, Theorem \ref{teo:subGamma} can be considered optimal in that sense.

\bibliographystyle{plain}
\bibliography{references}

\end{document}